\newcommand{\I}{{\bf 1}}
\theoremstyle{plain}% Theorem-like structures provided by amsthm.sty
\newtheorem{theorem}{Theorem}[section]
\newtheorem{corollary}[theorem]{Corollary}
\newtheorem{proposition}[theorem]{Proposition}
\theoremstyle{definition}
\theoremstyle{remark}
\newtheorem{remark}{Remark}
\newcommand{\nc}{\newcommand}
\nc{\R}{{\mathbb R}}
\nc{\N}{{\mathbb N}}
\renewcommand{\S}{{\mathbb S}}
\nc{\Z}{{\mathbb Z}}
\nc{\BP}{\mathbb{P}}
\renewcommand{\P}{\mathbb{P}}
\DeclareMathOperator{\BE}{\mathbb{E}}
\nc{\BQ}{\mathbb{Q}}
\nc{\bD}{{\mathbf D}}
\nc{\bF}{{\mathbf F}}
\nc{\bN}{{\mathbf N}}
\nc{\BX}{{\mathbb X}}
\nc{\BS}{{\mathbb S}}
\nc{\NB}{{\mathsf{NB}}}
\nc{\Bin}{{\mathsf{Bin}}}
\nc{\Po}{{\mathsf{Po}}}
\nc{\CPo}{{\mathsf{CPo}}}
\nc{\Er}{{\mathsf{Er}}}
\nc{\cB}{{\mathcal B}}
\nc{\deq}{\overset{D}{=}}
\nc{\comp}[1]{{#1}^{\mathbf{c}}}
\nc{\stas}{St$\alpha$S}
\nc{\thru}{,\dotsc,}
\renewcommand{\epsilon}{\varepsilon}
\renewcommand{\phi}{\varphi}
\DeclareMathOperator{\card}{card}
\DeclareMathOperator{\cone}{cone}
\DeclareMathOperator{\Int}{Int}
\DeclareMathOperator{\exo}{exo}
\nc{\seg}{{see, e.g.\ }}
\begin{document}

% \author{G\"unter Last\thanks{Karlsruhe Institute of Technology}
%  and Sergei Zuyev\thanks{Chalmers University of Technology}
% }
\author{G\"unter Last\thanks{Karlsruhe Institute of Technology,
   Institut f\"ur Stochastik, Kaiserstra{\ss}e 89, D-76128 Karlsruhe,
   Germany. Email: \texttt{guenter.last@kit.edu}} and
Sergei Zuyev \thanks{Chalmers University of Technology and
   University of Gothenburg, 
Gothenburg, Sweden. E-mail: \texttt{sergei.zuyev@chalmers.se}}}

\title{Applications of the perturbation formula for Poisson processes to elementary and geometric probability}
\date{\today}
\maketitle

\begin{abstract}
We present a unified approach to deriving integral representations for
the binomial, negative binomial, Poisson, compound Poisson, and Erlang
distributions with respect to their continuous parameters. This is
achieved using Margulis-Russo-type formulas for Bernoulli and Poisson
processes, which also provide a natural probabilistic interpretation
of their derivatives. Extending these variational methods, we derive
new integro-differential identities that characterise the densities of
strictly $\alpha$-stable multivariate distributions.
We further generalize Crofton's derivative formula from integral
geometry to the case of Poisson processes. This extension allows us
to establish a new probabilistic proof of the formula for binomial
point processes, highlighting the underlying geometric structure in a
probabilistic framework.
\end{abstract}

\bigskip
 \noindent
 {\em 2000 Mathematics Subject Classification.} 60E05, 60G55.

\bigskip
\noindent
{\em Keywords:}
  Margulis-Russo formula,
binomial distribution, negative binomial distribution, 
Poisson distribution, compound Poisson distribution,
Erlang distribution, multivariate strictly stable distribution,
Poisson process, binomial process, Crofton's derivative formula

\section{Introduction}\label{secintro}
The purpose of this article is to demonstrate how fundamental
derivative formulas, based on the principle of pivotality, can be used
to derive a wide range of distributional identities. Some of these
identities, particularly for classical univariate distributions, are
well-known and can be obtained by standard methods or direct
computation. Others, involving multivariate strictly stable
distributions and Crofton's formulas, appear here in a generality that,
to the best of our knowledge, has not been previously established.
Our method not only shows that all these formulas follow from the same
principle but also provides a clear probabilistic interpretation: the
derivatives represent the expected number, or, more generally,
the measure, of certain \emph{pivotal elements}. This interpretation gives
new insight into the structure of these identities.
The key elements of our approach are two main formulas: one for
Bernoulli systems and one for Poisson point processes. These serve as
the basis for deriving and understanding the distributional identities
presented below.

Consider a vector $X=(X_1,\dots,X_m), m\in\N,$ of independent
Bernoulli random variables, defined on its canonical probability space
$\Omega=\{0,1\}^m$ with $X_i$ having the success probability
$p_i\in[0,1],\ i=1\thru m$.  For an $i\in\{1,\dots,m\}$, let $X_{(i)}$ (resp.\
$X^{(i)}$) be a vector whose entries coincide with those of $X$ except
at the $i$-th coordinate, where the entry is $0$ (resp.\
$1$). A coordinate $i$ is called \emph{pivotal} for an event
$A\subset \Omega$ in configuration $X$, if $\I_A(X^{(i)})\neq
\I_A(X_{(i)})$. Note that
\begin{displaymath}
  \I_A(X)=X_i\I_A(X^{(i)}) + (1-X_i) \I_A(X_{(i)}),
\end{displaymath}
and because $X_i$ is independent of $\I_A(X^{(i)})$ and of $\I_A(X_{(i)})$,
\begin{align}
\BP(A)&=\BE \I_A(X)=p_i\BE\I_A(X^{(i)}) + (1-p_i) \BE\I_A(X_{(i)}), \notag  \\
\frac{\partial}{\partial p_i}\BP(A) &= \BE[\I_A(X^{(i)}) -\I_A(X_{(i)})]. \label{eq:ep} 
\end{align}
The term in the right side of \eqref{eq:ep} is known as the
\emph{influence} of the coordinate $i$ on the event $A$,  see, e.g.,
\cite[p.\,46]{BoRio06}.

Introduce an operator $D_i$ acting on a function
$g:\Omega\to\R$ as
\begin{displaymath}
  D_ig(x) := g(x^{(i)}) - g(x_{(i)}),\ x\in\Omega.
\end{displaymath}
Consider the case when $p_i=p$ for all $i$ and add an
index $p$ to the probability and expectation notation to show its value.
Now, \eqref{eq:ep} yields
\begin{equation}
  \label{mr}
  \frac{d}{dp}\BP_p(A) = \sum_{i=1}^m \frac{\partial}{\partial
    p_i}\Bigr|_{p_i=p}\BP(A) = \BE_p\sum_{i=1}^m D_i\I_A(X).
\end{equation}
Because only the terms corresponding to pivotal coordinates are non-zero in the
above sum, the derivative of the probability $\BP_p(A)$ equals the
expected number of pivotal coordinates for the event $A$. This is the
so-called \emph{Margulis--Russo formula} 
which was first proved in \cite{EsaPro:63}, where the authors call pivotal
coordinates \emph{essential}.
The formula was  independently
rediscovered in \cite{Marg:74} and \cite{Russo:81}, with a focus on
increasing events, i.e.\ such events where $\I_A$ is non-decreasing:
$D_i\I_A(x)\geq 0$ for all all $i$ and $x\in\Omega$.

Since any function on $\Omega$ is a linear combination of indicator functions,
\eqref{mr} extends to a {\em perturbation formula} for a general function:
\begin{equation}\label{mrrv}
  \frac{d}{dp} \BE_p g(X)=\BE_p \sum^m_{i=1} D_i g(X).
\end{equation}
An analogue of $D_i$ is the difference operator $D_z$ defined in
\eqref{eq:dz} as it appears in the perturbation formula for
Poisson processes in Section~\ref{secPoisson}.

In practice, it is often more convenient to use a modified form of the
Margulis--Russo formula proposed in
\cite{Men:87}. Assuming that $p\in(0,1)$ and using again independence
of $X_i$ from $D_i\I_A$, continue \eqref{mr} as follows:
\begin{align*}
  &\BE_p\sum_{i=1}^m D_i\I_A(X)\I\{D_i\I_A(X)\neq 0\}
  = \frac{1}{p} \BE_p\sum_{i=1}^m \I_A(X^{(i)})\I\{D_i\I_A(X)\neq
    0\}\I\{X_i=1\} \\
  & -\frac{1}{1-p} \BE_p\sum_{i=1}^m \I_A(X_{(i)})\I\{D_i\I_A(X)\neq
  0\}\I\{X_i=0\}.
\end{align*}
Introducing a \emph{flip operator} $T_i$ acting on $x\in\Omega$ as
\begin{displaymath}
  T_i (x_1\thru x_i\thru x_m)= (x_1\thru 1-x_i\thru x_m),
\end{displaymath}
the sums above can be written as
\begin{align}
  \label{pivot}
  N^+_A(X)=\sum_{i=1}^m \I\{X_i=1, X\in A, T_iX\not\in A\},\quad
 N^-_A(X)=\sum_{i=1}^m \I\{X_i=0, X\in A, T_iX\not\in A\}.
\end{align}
The coordinates $i$ which contribute non-zero terms to $N_A^+$ are
called $(+)$\emph{-pivotal} for event $A$ in configuration $X$
(respectively, $(-)$\emph{-pivotal} terms in $N_A^-$). We thus obtain the
following form of the Margulis--Russo formula:
\begin{equation}
  \label{marg2}
  \frac{d}{dp}\BP_p(A) = \frac{1}{p}\BE_p N_A^{+} - \frac{1}{1-p}\BE_p
  N_A^{-},\quad p\in(0,1).
\end{equation}
In the case of an increasing $A$, there are no (-)-pivotal coordinates
so the last term above is zero. 
%Introduction of
%($\pm$)-pivotality allowed to express the derivative of probability
%for general events and not only for monotone. 
Formula~\eqref{marg2} was used in
\cite{Zue:92b} to establish an analogous perturbation formula for
Poisson point processes which are covered in the next section. It is also
easier to apply because it involves counting only
pivotal coordinates for $X\in A$ ignoring the ones for $X\not\in A$
involved in $D_i\I_A$.

The power of the Margulis--Russo formula lies in the fact that it links the
variation of an event's probability to the geometry of configurations
comprising the event, i.e.\ the number of pivotal components for its
occurence. Many results in the percolation theory exploit this link,
see, e.g., \cite{Grim99} or \cite{BoRio06}.

To give a flavour, consider the binomial distribution
\begin{displaymath}
\Bin(n,p;k):=\binom{n}{k}p^k(1-p)^{n-k},\quad k\in\{0,\dots,n\}
\end{displaymath}
with parameters $n\in\N=\{1,2,\dots\}$ and $p\in[0,1]$.
Take $k\in\{1,\dots,n\}$, and consider the increasing event
$A:=\{S_n\ge k\}$, where $S_n:=X_1+\dots +X_n$. 
%Note that $\BP_p(A)$ coincides with the left-hand side of \eqref{bin}.
We then have
\begin{align*}
N^+_A=\begin{cases}
k,&\text{if $S_n=k$},\\
0,&\text{otherwise}.
\end{cases}
\end{align*}
Invoking \eqref{marg2}, leads to
\begin{displaymath}
\frac{d}{dp}\BP_p (A) = \frac{k}{p} \Bin(n,p;k)=\frac{n!}{(k-1)!(n-k)!}p^{k-1}(1-p)^{n-k}.
\end{displaymath}
Since $\BP_0(A)=0$, we obtain the following integral representation:
\begin{align}\label{bin}
\sum_{j=k}^n \Bin(n,p;j)=\frac{n!}{(k-1)!(n-k)!}\int^p_{0}t^{k-1}(1-t)^{n-k}dt,
\quad k\in\{1,\dots,n\}.
\end{align}

Another example concerns the negative binomial (aka Pascal) distribution
\begin{displaymath}
\NB(r,p;k):=\binom{k+r-1}{k}p^r(1-p)^k,\quad k\in\Z_+,
\end{displaymath}
with parameters $r\in\N$ and $p\in[0,1]$. This is the distribution of
the number of failures in a sequence of independent Bernoulli trials
until the $r$-th success.  Fix natural numbers $r, k$. We can take $m:=k+r-1$
to see that
\begin{align*}
\sum_{j=0}^k \NB(r,p;j)=\BP(A),
\end{align*}
where $A$ denotes the increasing event $\{S_{k+r-1}\ge r\}$. We then have
\begin{align*}
N^+_A=\begin{cases}
r,&\text{if $S_{k+r-1}=r$},\\
0,&\text{otherwise},
\end{cases}
\end{align*}
and \eqref{marg2} yields that
\begin{displaymath}
\frac{d}{dp}\BP_p (A) =\frac{r}{p}\Bin(k+r-1,p;r)
=\frac{(k+r-1)!}{(k-1)!(r-1)!}p^{r-1}(1-p)^{k-1}.
\end{displaymath}
Since $\BP_0(A)=0$, we get the identity
\begin{align}\label{negbin}
\sum_{j=0}^k \NB(r,p;j)=\frac{(k+r-1)!}{(k-1)!(r-1)!}\int^p_{0}t^{r-1}(1-t)^{k-1}dt,
\quad k\in\N.
\end{align}
Both \eqref{bin} and \eqref{negbin} could be checked
directly by differentiation, but the variation formula \eqref{mr} provides a
probabilistic insight into the variation of the probability.

The aim of this paper is to demonstrate the power of an analogous to
\eqref{mr} variation formula for Poisson processes. Let $\lambda$ be a
finite measure on some measurable space $\BX$ and $\theta\ge
0$. Consider a point process $\eta$ on $\BX$ and a probability measure
$\BP_\theta$ such that $\eta$ is (under $\BP_\theta$) a Poisson
process with intensity measure $\theta \lambda$. It was shown in
\cite{Zue:92b} that, if $A$ is an event defined in terms of $\eta$,
then \eqref{mr} holds with $\theta$ replacing $p$, provided that
\eqref{pivot} is modified as follows:
\begin{align}\label{pivotpoisson}
N^+_A:=\int\I\{\eta+\delta_z \in A,\eta\notin A\}\lambda(dz), 
\qquad N^-_A:=\int\I\{\eta\in A,\eta+\delta_z\notin A\}\lambda(dz),
\end{align}
where, generically, $\delta_z$ is the Dirac measure at
$z$. Notice, that by Mecke's formula (\seg \cite[Th.~4.1]{LastPenrose18}),
\begin{displaymath}
  \BE_\theta N_A^+=\frac{1}{\theta}\, \BE_\theta
  \int \I\{\eta \in A,\eta-\delta_z\notin A\}\eta(dz),
\end{displaymath}
So, analogously to the Bernoulli case, the process points $z\in\eta$
such that $\eta \in A$, but $\eta-\delta_{z}\notin A$ maybe called
\emph{pivotal points}, whereas $z \in \BX$ such that $\eta\in A$, but
$\eta+\delta_z\notin A$ are called \emph{pivotal locations}.

In the next section we provide a review of the variation formulas for
Poisson processes and to prove with their help distributional
identities for some classical one-dimensional distributions: Poisson,
Erlang and compound Poisson. We then consider multivariate strictly
stable distributions in
Section~\ref{sec:strictly-alpha-stabl}. Theorem~\ref{th:stab} provides
formulas which are apparently new and could possibly be used for an
effective computation of the stable density which is not known
explicitly apart for a very few particular values of its parameters.
In Section~\ref{secCroftonPoiss}, we extend Crofton's derivative
formula. In the final Section \ref{secBinomial} we use this extension
to give a new probabilistic proof of a version of this formula for
binomial point processes.

\section{A perturbation formula for Poisson processes}\label{secPoisson}

In this section we review a perturbation formula for general Poisson
processes.  Let $(\BX,\mathcal{X})$ be a Borel space and let
$\bN(\BX)\equiv \bN$ be the space of integer-valued $\sigma$-finite
measures $\varphi$ on $\BX$, equipped with the smallest $\sigma$-field
$\mathcal{N}$ making the mappings $\varphi\mapsto\varphi(B)$
measurable for all $B\in\mathcal{X}$.

%We fix a $\sigma$-finite measure $\lambda$ on $(\BX,\mathcal{X})$.
%For any $\theta\ge 0$ we let $\BP_\theta$ be a probability measure on
%$(\bN,\mathcal{N})$ such that the distribution of the identity $\eta$ on
%$\bN$ is that of a Poisson process with intensity measure $\theta\lambda$.
%Expectations with respect to $\BP_\theta$ are denoted by $\BE_\theta$.

For any $g\colon\bN\rightarrow\R$ and $z\in\BX$,
introduce a function $D_zg\colon \bN\to\R$ by means
of
\begin{equation}
  D_zg(\phi)=g(\phi+\delta_z)-g(\phi).\label{eq:dz}
\end{equation}
The mapping $g\mapsto D_zg$ is known as {\em difference operator}.
For $k\in\N$ the $k$-th iteration $D^{k}_{z_1,\ldots,z_k}g:\bN\rightarrow\R$,
of this operator is inductively defined by 
$D^{k}_{z_1,\ldots,z_k}g=D_{z_k} D^{k-1}_{z_1,\ldots,z_{k-1}}g$ for
$(z_1,\ldots,z_k)\in\BX^k$. 

%Given any $\varphi\in\bN$, denote by $\varphi_B\in\bN$ the restriction of $\varphi$ to $B\in\mathcal{X}$.
%We say that a function $g$ above has a {\em finite support}
%if there is some $B\in\mathcal{X}$ such that $\lambda(B)<\infty$ and
%$g(\varphi)=g(\varphi_B)$ for all $\varphi\in\bN$.
%For indicator functions the following Margulis-Russo type formula
%was derived in \cite{Zue:92b}. The present version is 
%a special case of more general results in \cite{Last14}.

Given any $\sigma$-finite measure
$\rho$ on $\BX$, we let $\eta_\rho$ denote a Poisson process with
this intensity measure. The following perturbation formula is
a special case of Theorem 19.3 in \cite{LastPenrose18}.

\begin{theorem}\label{tanalytic} Let $\lambda$  be a $\sigma$-finite 
and let $\nu$ be a finite measure on $\BX$.
Let $g\colon\bN\rightarrow\R$ be a measurable
function such that $\BE |g(\eta_{\lambda+\nu})|<\infty$.
Let $\theta\in(-\infty,1]$ such that $\lambda+\theta\nu$ is a measure. Then
\begin{align}\label{emar4}
\BE g(\eta_{\lambda+\theta\nu})=\BE g(\eta_\lambda)+
\sum^\infty_{k=1}\frac{\theta^k}{k!}\int \BE D^k_{x_1,\ldots,x_k}g(\eta_\lambda)\,
\nu^k(d(x_1,\ldots,x_k)),
\end{align}
where the series converges absolutely.
\end{theorem}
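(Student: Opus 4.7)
The plan is to establish the formula first for $\theta\geq 0$ by an explicit probabilistic computation, and then extend it to the full admissible range by analytic continuation. For $\theta\geq 0$, the Poisson superposition property allows us to realise $\eta_{\lambda+\theta\nu}$ as an independent sum $\eta_\lambda+\eta_{\theta\nu}$; since $\nu$ is finite, conditioning on the Poisson$(\theta\nu(\BX))$-distributed number of points of $\eta_{\theta\nu}$, which are i.i.d.\ from $\nu/\nu(\BX)$, produces
\begin{equation*}
\BE f(\eta_{\lambda+\theta\nu})=\sum_{n=0}^{\infty}\frac{e^{-\theta\nu(\BX)}\theta^n}{n!}\int\BE f\bigl(\eta_\lambda+\textstyle\sum_{i=1}^{n}\delta_{x_i}\bigr)\,\nu^n(d(x_1,\ldots,x_n)).
\end{equation*}

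Next I would expand $e^{-\theta\nu(\BX)}=\sum_{j\geq 0}(-\theta)^j\nu(\BX)^j/j!$, rearrange the double sum by setting $k=n+j$, and collect powers of $\theta$. The coefficient of $\theta^k/k!$ is then
\begin{equation*}
\sum_{i=0}^{k}\binom{k}{i}(-1)^{k-i}\nu(\BX)^{k-i}\int\BE f\bigl(\eta_\lambda+\textstyle\sum_{j=1}^{i}\delta_{x_j}\bigr)\,\nu^i(d(x_1,\ldots,x_i)),
\end{equation*}
which I would identify with $\int\BE D^k_{x_1,\ldots,x_k}f(\eta_\lambda)\,\nu^k(d(x_1,\ldots,x_k))$ via the standard set-theoretic expansion $D^k_{x_1,\ldots,x_k}f(\phi)=\sum_{J\subseteq\{1,\ldots,k\}}(-1)^{k-|J|}f(\phi+\sum_{j\in J}\delta_{x_j})$: grouping subsets by cardinality $|J|=i$ and integrating out the $k-i$ coordinates outside $J$ produces exactly the binomial factor $\binom{k}{i}$ and the missing power $\nu(\BX)^{k-i}$, using the symmetry of $\nu^k$.

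The Fubini interchange and absolute convergence for $\theta\in[0,1]$ are controlled by running the same manipulation with $|f|$ in place of $f$ and $\theta=1$: the result is $\BE|f(\eta_{\lambda+\nu})|$, which is finite by hypothesis. For $\theta\in(-\infty,0)$ with $\lambda+\theta\nu$ still a positive measure, the right-hand side of \eqref{emar4} remains an absolutely convergent power series in $\theta$ by the same bound, hence defines an entire function, while the left-hand side is analytic in $\theta$ on the admissible range because the superposition argument applies around any admissible reference point $\theta_0$: one writes $\lambda+\theta\nu=(\lambda+\theta_0\nu)+(\theta-\theta_0)\nu$ and uses the nonnegative-increment case. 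Uniqueness of analytic continuation then delivers the formula on the whole admissible interval. The main obstacle I anticipate is precisely this uniform-integrability bookkeeping: one must verify that the integrability of $f(\eta_{\lambda+\nu})$ propagates to dominate every summand $\int\BE|D^k_{x_1,\ldots,x_k}f(\eta_\lambda)|\,\nu^k(d(x_1,\ldots,x_k))$, and that the coupling used in the nonnegative case survives the passage to negative $\theta$ via the reference-point trick.
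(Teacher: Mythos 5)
The paper itself contains no proof of Theorem~\ref{tanalytic}: it is quoted as a special case of Theorem~19.3 of Last and Penrose, so the comparison is really with that textbook argument. For $\theta\in[0,1]$ your proof is correct and is essentially that argument: superposition and conditioning on the number of points of $\eta_{\theta\nu}$ give the mixed expansion, the rearrangement is dominated by the same computation run with $|f|$ at $\theta=1$ (yielding the bound $e^{2\nu(\BX)}\BE|f(\eta_{\lambda+\nu})|<\infty$), and the subset expansion $D^k_{x_1,\dots,x_k}f(\phi)=\sum_{J\subseteq\{1,\dots,k\}}(-1)^{k-|J|}f\bigl(\phi+\sum_{j\in J}\delta_{x_j}\bigr)$ combined with the symmetry of $\nu^k$ identifies the coefficient of $\theta^k/k!$ correctly. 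The same domination gives absolute convergence for all $|\theta|\le 1$, so your continuation step does legitimately reach admissible $\theta\in[-1,0)$ (modulo spelling out that the one-sided expansions around reference points $\theta_0$ converge on discs of radius $1-\theta_0$ and patch by uniqueness of analytic continuation, with an Abel-type continuity remark at the endpoint $\theta=-1$).

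The genuine gap is the assertion that the right-hand side of \eqref{emar4} ``defines an entire function.'' Your estimate controls $\sum_k\frac{|\theta|^k}{k!}\int\BE|D^k_{x_1,\dots,x_k}f(\eta_\lambda)|\,\nu^k(d(x_1,\dots,x_k))$ only by $e^{|\theta|\nu(\BX)}\sum_i\frac{|\theta|^i}{i!}\int\BE|f(\eta_\lambda+\delta_{x_1}+\dots+\delta_{x_i})|\,\nu^i(d(x_1,\dots,x_i))$, and the hypothesis $\BE|f(\eta_{\lambda+\nu})|<\infty$ makes the latter finite only for $|\theta|\le 1$. Beyond that the series can genuinely diverge. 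Take $\BX=\{z\}$, $\lambda=10\,\delta_z$, $\nu=\delta_z$ and $f(\phi)=\phi(\BX)!\,12^{-\phi(\BX)}$; then $\BE|f(\eta_{\lambda+\nu})|=12e^{-11}<\infty$, while $\BE f(\eta_{\lambda+\theta\nu})=12e^{-(10+\theta)}/(2-\theta)$, whose Taylor series at $\theta=0$ --- which is exactly the series in \eqref{emar4}, since $\int \BE D^k f(\eta_\lambda)\,d\nu^k$ is the $k$-th derivative at $\theta=0$ --- has radius of convergence $2$ because of the pole at $\theta=2$, hence diverges at the admissible point $\theta=-10$. So for $\theta<-1$ no repair of your argument is possible under the stated hypotheses: the range $(-\infty,1]$ in the statement overreaches the cited result, and your proof, like the textbook one, genuinely covers only $|\theta|\le 1$. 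You should either restrict to that range or impose stronger integrability to push the radius of convergence further.
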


The earliest version of Theorem \ref{tanalytic} (for a bounded function $g$) was
proved in \cite{Zue:92b}. Later this was generalised in \cite{MolZu00}. 
For square integrable random variables the result can be extended to 
certain (signed) $\sigma$-finite perturbations; see \cite{Last14}.

For later reference we provide the following consequence of
Theorem \ref{tanalytic} (set $\nu=\lambda$ in \eqref{emar4}).

\begin{proposition}\label{p2} Let $\lambda$ be a finite measure on $\BX$.
Let $g\colon \bN\rightarrow\R$ be a measurable function such
that $\BE |g(\eta_{\theta_0\lambda})|<\infty$ for some $\theta_0>0$. 
Then $\theta\mapsto\BE g(\eta_{\theta\lambda})$
is analytic on $[0,\theta_0]$ and its derivatives are given by
\begin{align}\label{russopoiss}
 \frac{d^k}{d\theta^k}\BE g(\eta_{\theta\lambda})
=\int\cdots\int \BE D^{k}_{z_1,\ldots,z_k}g(\eta_{\theta\lambda})
  \,\lambda(dz_1)\cdots\lambda(dz_k), 
\quad \theta\le\theta_0.  
\end{align}
%In particular,
%\begin{equation}\label{russopoiss}
%\frac{d}{d\theta}\BE g(\eta_{\theta\lambda})
%=\int\BE_{\theta} \big[g(\eta_{\theta\lambda}+\delta_z)-g(\eta_{\theta\lambda})\big]\,\lambda(dz),\quad \theta\le \th%eta_0.
%\end{equation}
\end{proposition}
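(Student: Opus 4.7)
The plan is to deduce Proposition~\ref{p2} directly from Theorem~\ref{tanalytic} by choosing the base and perturbation measures there so that the one-parameter family $\theta\mapsto\theta\lambda$ is traced out in a neighbourhood of an arbitrary base point $\theta_1\in[0,\theta_0]$. Fix such a $\theta_1$ and apply Theorem~\ref{tanalytic} with its $\lambda$ replaced by $\theta_1\lambda$ and its $\nu$ replaced by the finite measure $(\theta_0-\theta_1)\lambda$. The integrability hypothesis then reads $\BE|g(\eta_{\theta_0\lambda})|<\infty$, which is precisely our standing assumption.

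For every $s\in(-\infty,1]$ with $\theta_1+s(\theta_0-\theta_1)\ge 0$, the measure $(\theta_1+s(\theta_0-\theta_1))\lambda$ is nonnegative, and Theorem~\ref{tanalytic} yields the absolutely convergent expansion
\begin{equation*}
\BE g(\eta_{(\theta_1+s(\theta_0-\theta_1))\lambda})=\sum_{k=0}^\infty\frac{s^k(\theta_0-\theta_1)^k}{k!}\int\BE D^{k}_{z_1,\ldots,z_k}g(\eta_{\theta_1\lambda})\,\lambda^{k}(d(z_1,\ldots,z_k)).
\end{equation*}
Substituting $\theta=\theta_1+s(\theta_0-\theta_1)$ turns this into an absolutely convergent power series in $\theta-\theta_1$ representing $\theta\mapsto\BE g(\eta_{\theta\lambda})$ on a neighbourhood of $\theta_1$ inside $[0,\theta_0]$. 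Hence the function is real-analytic at $\theta_1$, and uniqueness of Taylor coefficients identifies $k!$ times the $k$-th coefficient with the $k$-th derivative at $\theta_1$, producing formula~\eqref{russopoiss}. Letting $\theta_1$ range over $[0,\theta_0]$ then gives analyticity on the whole interval together with the stated derivative formula at every point.

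The only genuinely delicate bookkeeping concerns the endpoints $\theta_1\in\{0,\theta_0\}$, where the above construction yields only a one-sided convergent series. This is easily handled by expanding around an interior $\theta_1'$ close to the endpoint and differentiating the resulting series termwise inside the open interval of convergence, then passing to the appropriate one-sided limit. Apart from this routine step, no further estimates are needed beyond those already built into Theorem~\ref{tanalytic}; this is really the main point to check, since absolute convergence of the series and the identification of its coefficients with iterated difference operators come for free from that theorem.
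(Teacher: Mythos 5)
Your derivation is precisely the route the paper intends: Proposition \ref{p2} is stated there without proof, as an immediate consequence of Theorem \ref{tanalytic}, and taking base measure $\theta_1\lambda$ and perturbation $\nu=(\theta_0-\theta_1)\lambda$ is the natural way to make that precise. On $[0,\theta_0)$ your argument is complete: absolute convergence of the expansion at $s=1$ forces the power series in $\theta-\theta_1$ to converge absolutely on the closed disc of radius $\theta_0-\theta_1$, so the function is real-analytic at every interior point and \eqref{russopoiss} is read off from the Taylor coefficients at the centre; at $\theta_1=0$ no extra limiting step is needed, since the $k$-th one-sided derivative at the left end of a convergent power series equals $k!$ times its $k$-th coefficient. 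The one point that does not close is the right endpoint $\theta=\theta_0$: it lies on the boundary of every disc of convergence your construction produces, termwise differentiation is licensed only strictly inside that disc, and the proposed passage to a one-sided limit presupposes that the limit of the derivative exists and equals the one-sided derivative at $\theta_0$, which does not follow from absolute convergence of the undifferentiated series at the boundary. Indeed, the hypothesis $\BE |g(\eta_{\theta_0\lambda})|<\infty$ does not even guarantee that the right-hand side of \eqref{russopoiss} is well defined at $\theta=\theta_0$: take $\BX$ a singleton with $\lambda$ the unit point mass and $g(\varphi)=G(\varphi(\BX))$ with $G(m)=m!\,\theta_0^{-m}m^{-2}$ for $m\ge 1$; then $\BE |g(\eta_{\theta_0\lambda})|<\infty$ while $\int\BE |D_zg(\eta_{\theta_0\lambda})|\,\lambda(dz)=\BE|G(N+1)-G(N)|=\infty$ for $N$ Poisson with mean $\theta_0$. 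This defect is inherited from the paper's own formulation, which asserts the formula for all $\theta\le\theta_0$ without comment; restricted to $\theta<\theta_0$ (with a one-sided reading at $\theta=0$) your proof is correct and coincides with the paper's approach.
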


Using the indicator $\I_A$ above as the function $g$,
implies the Poisson process version of the Margulis-Russo formula
\eqref{mr} with notation \eqref{pivotpoisson}.

\paragraph*{The Poisson and the Erlang distribution.}

We now apply \eqref{russopoiss} to derive a few distributional
identities for classical univariate distributions.
The {\em Poisson distribution} with parameter $\theta\ge 0$
is given by
\begin{displaymath}
\Po(\theta;k):=\frac{\theta^k}{k!}e^{-\theta},\quad k \in \Z_+.
\end{displaymath}
The {\em Erlang distribution} with parameters $n\in\N$ and $\theta>0$
has density function
\begin{displaymath}
\Er(n,\theta;x):=\frac{\theta^n}{(n-1)!}x^{n-1} e^{-\theta x},\quad x\ge 0.
\end{displaymath}

\begin{proposition}
The Poisson distribution with parameter $\theta\ge 0$ satisfies
\begin{align}\label{Poisson}
\sum_{j=k}^\infty \Po(\theta;j)=\int^\theta_{0}\frac{t^{k-1}}{(k-1)!}e^{-t}dt,
\quad k\in\N.
\end{align}
The distribution function of the Erlang distribution with parameters $n\in\N$ and $\theta>0$ 
may be written as
\begin{align}\label{Erlang}
\int_0^x\Er(n,\theta;y)dy=\frac{x^n}{(n-1)!} \int_0^\theta t^{n-1} e^{-t x}dt,\quad x\ge 0.
\end{align}
\end{proposition}
\begin{proof}
  In the notation of the previous section, set $\BX=\{z\}$ to be a
  singleton and $\lambda\{z\}=1$. Then 
  $\eta\{z\}$ is just a Poisson random variable with parameter 1. Take
  $k\in\N$ and consider the event $A:=\{\eta\{z\}\ge k\}$. Then
  $\BP_\theta(A)$ is given by the left-hand side of \eqref{Poisson}
  and we have
  \begin{displaymath}
    \I_A(\eta+\delta_z)-\I_A(\eta)=\I\{\eta\{z\}= k-1\}.
  \end{displaymath}
  Since $\BP_0(A)=0$, \eqref{russopoiss} implies \eqref{Poisson}.
  Eq.~\eqref{Erlang} follows from \eqref{Poisson} and the identity
  \begin{align}\label{Erlang2}
    \int_0^x\Er(n,\theta;y)dy=\sum^\infty_{j=n}\Po(\theta x;j).
  \end{align}

\end{proof}

\paragraph*{The Compound Poisson distribution.}
\label{sec:comp-poiss-distr}

Let $\BQ$ be a probability distribution on $\R$. The
{\em compound Poisson distribution} with parameters $\theta\ge 0$
and $\BQ$ is given by
\begin{displaymath}
\CPo(\theta,\BQ):=\sum^\infty_{n=0} \frac{\theta^n}{n!}e^{-\theta}\BQ^{\ast n},
\end{displaymath}
so it equals the distribution of Poisson $\Po(\theta)$ number of
independent summands each having distribution $\BQ$.

\begin{proposition}
  \label{prop:cpois}
  The distribution function $F(\theta,\BQ;x):=\CPo(\theta,\BQ)((-\infty,x])$,
  $x\in\R$, of the Compound Poisson distribution satisfies
  \begin{align}\label{CPoisson3}
\frac{d}{d\theta}F(\theta,\BQ;x)
=\int F(\theta,\BQ;x-z)\BQ(dz) -F(\theta,\BQ;x).
\end{align}
When $\BQ$ is concentrated on $\Z$, 
\begin{align}\label{CPoisson1}
\frac{d}{d\theta}\CPo(\theta,\BQ;k)
=\sum_{j\ne k}q_{k-j}\CPo(\theta,\BQ;j)
-(1-q_0)\CPo(\theta,\BQ;k),\quad k\in\Z,
\end{align}
where 
$\CPo(\theta,\BQ;j):=\CPo(\theta,\BQ)(\{j\})$, and
$q_j:=\BQ(\{j\})$, $j\in\Z$. Equivalently,
\begin{align}\label{CPoisson2}
\frac{d}{d\theta}\CPo(\theta,\BQ;k)
=\sum_{j\in\Z}q_{k-j}\CPo(\theta,\BQ;j) -\CPo(\theta,\BQ;k),\quad k\in\Z.
\end{align}
\end{proposition}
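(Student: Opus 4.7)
The plan is to reduce everything to a single application of Proposition~\ref{p2} with $k=1$. Take $\BX=\R$ and the finite measure $\lambda=\BQ$, so that under $\BP_\theta$ the Poisson process $\eta=\eta_{\theta\BQ}$ has intensity $\theta\BQ$. Writing the total sum $S(\phi):=\int z\,\phi(dz)$ for $\phi\in\bN$, the basic superposition property of Poisson processes gives $S(\eta_{\theta\BQ})\sim\CPo(\theta,\BQ)$, so the distribution function is $F(\theta,\BQ;x)=\BE g(\eta_{\theta\BQ})$ with $g(\phi):=\I\{S(\phi)\le x\}$. Since $g$ is bounded, the integrability hypothesis of Proposition~\ref{p2} is trivial for every $\theta_0>0$.

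Next I would compute the difference operator on $g$. For each $z\in\R$,
\[
D_z g(\phi)=\I\{S(\phi)+z\le x\}-\I\{S(\phi)\le x\},
\]
so $\BE D_z g(\eta_{\theta\BQ})=F(\theta,\BQ;x-z)-F(\theta,\BQ;x)$. Plugging this into \eqref{russopoiss} with $k=1$ and $\lambda=\BQ$ yields \eqref{CPoisson3} immediately, because $\int F(\theta,\BQ;x)\,\BQ(dz)=F(\theta,\BQ;x)$.

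For \eqref{CPoisson1} and \eqref{CPoisson2} I would repeat the argument with $g(\phi):=\I\{S(\phi)=k\}$, which is again bounded and which satisfies $\BE g(\eta_{\theta\BQ})=\CPo(\theta,\BQ;k)$ because, when $\BQ$ is supported in $\Z$, $S(\eta_{\theta\BQ})$ is integer-valued. One computes
\[
D_z g(\phi)=\I\{S(\phi)=k-z\}-\I\{S(\phi)=k\},
\]
hence $\BE D_z g(\eta_{\theta\BQ})=\CPo(\theta,\BQ;k-z)-\CPo(\theta,\BQ;k)$. Applying \eqref{russopoiss} with $k=1$ and changing variables $j=k-z$ in the sum over $z\in\Z$ produces \eqref{CPoisson2}. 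The equivalent form \eqref{CPoisson1} is then a matter of splitting off the $j=k$ term from the sum and rearranging using $q_0\CPo(\theta,\BQ;k)-\CPo(\theta,\BQ;k)=-(1-q_0)\CPo(\theta,\BQ;k)$.

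There is no real obstacle here: both functions $g$ are indicators, so analyticity and integrability are automatic, and the only substantive observation is that the test function for the compound Poisson mass at $k$ is well-defined only once $\BQ$ is supported on $\Z$ (which is exactly the hypothesis of the second part). If anything needs care it is the change of variables in the lattice sum, which one should state explicitly to make the passage from \eqref{CPoisson2} to \eqref{CPoisson1} transparent.
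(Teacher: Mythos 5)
Your proof is correct and follows essentially the same route as the paper: both apply the perturbation formula \eqref{russopoiss} with $\BX=\R$, $\lambda=\BQ$ and $Z=\int z\,\eta(dz)$ to the indicator of $\{Z\le x\}$. The only cosmetic differences are that you write $D_zg$ directly rather than splitting into $(\pm)$-pivotal contributions as the paper does, and for the lattice case you apply the formula to $\I\{S(\phi)=k\}$ directly instead of taking differences of \eqref{CPoisson3}; both variants are equally valid.
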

\begin{proof}
  
% Both \eqref{CPoisson2} and \eqref{CPoisson3} are nothing
% but the Kolmogorov forward equation for a compound Poisson
% process, see Remark \ref{rcPoisson}.
  To apply \eqref{russopoiss}, take $\BX:=\R$ and let $\lambda:=\BQ$.
  Under $\BP_\theta$, the random variable $Z:=\int z\eta(dz)$ has the
  compound Poisson distribution $\CPo(\theta,\BQ)$.  Consider the
  event $A:=\{Z\le x\}$ for some $x\in\R$.  Then, for $z\in\R$,
  \begin{displaymath}
\I_A(\eta+\delta_z)-\I_A(\eta)=\I\{Z>x,Z+z\le x\}-\I\{Z\le x,Z+z>x\},
\end{displaymath}
so that \eqref{russopoiss} writes
\begin{align*}
\frac{d}{d\theta}\BP_\theta(A)=&
\BE_\theta\int_{(-\infty,0)}\I\{Z+z\le x\}(1-\I\{Z\le x\})\BQ(dz)\\
&-\BE_\theta\int_{(0,\infty)}(1-\I\{Z+z\le x\})\I\{Z\le x\}\BQ(dz)\\
=&
\BE_\theta\int_{\R\setminus\{0\}}\I\{Z+z\le x\}\BQ(dz)-
\BP_\theta(Z\le x)\BQ(\R\setminus\{0\}).
\end{align*}
This yields \eqref{CPoisson3}. 
If $\BQ(\Z)=1$, then \eqref{CPoisson2} (and hence also 
\eqref{CPoisson1}) follows upon taking 
suitable differences. 
\end{proof}

\begin{remark}
  Identity \eqref{CPoisson1} is equivalent to
\begin{align}\label{rec}
\CPo(\theta,\BQ;k)
=e^{-(1-q_0)\theta}\sum_{j\ne k}q_{k-j}\int^\theta_0e^{(1-q_0)t}\CPo(t,\BQ;j)dt,
\quad k\in\Z.
\end{align}
If, in addition, $q_j=0$ for $j<0$, then
it follows from the definition of $\CPo(\theta,\BQ)$ (or from
$\CPo(\theta,\BQ;0)=e^{-(1-q_0)\theta}$ and \eqref{rec}) that
$e^{(1-q_0)\theta}\CPo(\theta,\BQ;k)$ is a polynomial in $\theta$
of degree $k$.  Equations \eqref{rec} provide a recursion
for the coefficients of these polynomials.
\end{remark}
\begin{remark}\label{rpanjer}
The characteristic function of $\CPo(\theta,\BQ;k)$ is given by
 \begin{align}\label{12}
 G(\theta,\BQ;s):=\int e^{\mathbf{i}sz}\CPo(\theta,\BQ)(dz)
 =\exp[\theta(G_{\BQ}(s)-1)],\quad s\in\R,
 \end{align}
 where $\mathbf{i}$ is the imaginary unit and
 $G_{\BQ}$ is the characteristic function of $\BQ$.
% Let us assume for the moment that $\BQ(\Z)=1$, where
% $\Z:=\{\dots,-1,0,1\dots\}$.
% Differentiating \eqref{12} with respect to $\theta$
% and using the behaviour of characteristic functions under
% convolutions yields that
The recursion \eqref{rec} can also be obtained by differentiating \eqref{12}
with respect to $\theta$.
Differentiation of \eqref{12} with respect to $s$ 
and assuming $q_j=0$ for $j<0$,
yields the widely used Panjer recursion \cite{Panjer81}:
\begin{align}\label{recPanjer}
\CPo(\theta,\BQ;k)
=\theta\sum^{k-1}_{j=0}\frac{k-j}{k}q_{k-j} \CPo(\theta,\BQ;j),
\quad k\in\N.
\end{align}
\end{remark}

\begin{remark}\label{rcPoisson}
Consider a compound Poisson process $(X_t)_{t\ge 0}$
driven by a unit rate Poisson process and
jump size distribution $\BQ$, see e.g.\ \cite[Ch.~12]{Kallenberg02}.
Then $X_t$ has distribution $\CPo(t,\BQ)$ and
\eqref{CPoisson2} and \eqref{CPoisson3} are two examples for
the Kolmogorov forward equation, see e.g.\ 
\cite[Ch.~19]{Kallenberg02}.
\end{remark}

\begin{remark}
  Take $\BX:=[0,\infty)$ and $\lambda$ as the Lebesgue
  measure on $\BX$. Let $T_1<T_2<\dots$ be the atoms of $\eta$
  arranged in increasing order.  Let $n\in\N$ and consider the event
  $A:=\{T_n\le x\}$ for $x\ge 0$. It is well-known that
  $\BP_\theta(A)$ coincides with the left-hand side of \eqref{Erlang}.
  On the other hand we have for all $z\ge 0$ (with obvious notation)
  \begin{displaymath}
\I\{T_n(\eta+\delta_z)\le x\}-\I\{T_n(\eta)\le x\}=\I\{z\le
x\}\I\{\eta[0,x]=n-1\},
\end{displaymath}
so that \eqref{Poisson} yields
\begin{align*}
  \frac{d}{d\theta}\BP_{\theta}(A)
  &=x\BP\bigl(\eta[0,x]=n-1\bigr)=\frac{x^n}{(n-1)!}\theta^{n-1}e^{-\theta x}.
\end{align*}
Since $\BP_{0}(A)=0$, we again obtain \eqref{Erlang}.
\end{remark}

\section{Strictly $\alpha$-stable laws}
\label{sec:strictly-alpha-stabl}

A random vector $\xi$ (or its distribution) is called strictly
$\alpha$-stable (\stas), if the following equality in distribution holds:
\begin{equation}
  \label{eq:sas}
  t^{1/\alpha}\xi'+(1-t)^{1/\alpha}\xi'' \deq \xi,\quad
  0\leq t\leq 1,
\end{equation}
where $\xi', \xi''$ are independent distributional copies of $\xi$. In
Euclidean spaces, \stas\ laws exist only for $0<\alpha\leq2$ and
$\alpha=2$ corresponds to the Gaussian distribution centred at the
origin. Symmetrical \stas\ random vectors in $\R^n$ with $\alpha<2$ and all
\stas\ random vectors with $\alpha<1$ admit the following
\emph{LePage series representation} (see \cite{lp81}):
\begin{equation}
  \label{eq:lp}
  \xi\deq \sum_{k=1}^\infty \Gamma_k^{-1/\alpha}\epsilon_k,
\end{equation}
where $\Gamma_1,\Gamma_2,\dots$ are the successive times of jumps of a
homogeneous Poisson process on $\R_+$ with intensity $\theta$, and $\epsilon_1,
\epsilon_2,\dots$ are i.i.d.\ random vectors on the unit sphere
$\S^{n-1}$. Thus their distribution is characterised by two
parameters: the Poisson process intensity $\theta$ and the
probability measure $\hat{\sigma}$ on the sphere -- the distribution
of $\epsilon_k$'s. 
By the marking theorem for Poisson processes (\seg~\cite[Th.~5.6]{LastPenrose18}),
$\sum^\infty_{k=1}\delta_{(\Gamma_k, \epsilon_k)}$ is a Poisson on
$\R_+\times \S^{n-1}$ with intensity measure $\theta dt\times \hat{\sigma}$.
Hence we can appeal to the mapping theorem  (\seg~\cite[Th.~5.1]{LastPenrose18})
to see that 
\begin{align*}
\eta_\theta:=\sum^\infty_{k=1}\delta_{\Gamma_k^{-1/\alpha}\epsilon_k}
\end{align*}
is a Poisson process on $\R^n\setminus\{0\}$ with intensity measure
\begin{equation}\label{eq:levy}
\Lambda_\theta:=\theta\int_{\S^{n-1}}\int^\infty_0
\I\{t^{-1/\alpha}u\in\cdot\}\,dt\,\hat{\sigma}(du) =\theta\Lambda_1. 
\end{equation}
The right-hand side of \eqref{eq:lp} can be written as
a point process integral, so that
\begin{equation}\label{eq:lpp}
\xi_\theta:= \int u\, \eta_\theta(du),
\end{equation}
is a stable random vector with the given parameters.
The integrals here in this section are taken over $\R^n\setminus\{0\}$ unless specified
otherwise. 
The \stas\ distribution is infinitely divisible with
\emph{L\'evy measure} $\Lambda_\theta$ given by \eqref{eq:levy}. The measure
$\sigma=\theta\hat{\sigma}$ on $\S^{n-1}$
is called the \emph{spectral measure} of $\xi_\theta$ (or of $\Lambda_\theta$).
The convergence of the integral \eqref{eq:lpp} for $\alpha<1$ or for all
$\alpha<2$ in the case of a symmetrical spectral measure is
guaranteed, for instance, by \cite[Lem.~12.13]{Kallenberg02}. However, 
  the spectral measure need not be symmetric in order for the LePage
  representation \eqref{eq:lp} to hold. For instance, when
  $\alpha\geq1$ it is sufficient that a non-symmetric spectral measure
  satisfies $\int s\,\sigma(ds)=0$, see \cite[Th.~2]{BJP:01}.

By definition of $\Lambda_\theta$, we have for each Borel set
$B\subset\R^n\setminus\{0\}$ and each $c>0$ that
\begin{displaymath}
  \Lambda_\theta(c B)=c^{-\alpha}\Lambda_\theta(B).
\end{displaymath}
From \eqref{eq:lpp} and the above scaling property we obtain that
$\xi_\theta\deq \theta^{1/\alpha}\xi_1$.

%If $\xi_\theta$ denotes the \stas\ r.v.\ constructed on
%the Poisson process with rate $\theta$, then by the scaling property
%of the Poisson process we obtain that $\xi_\theta\deq
%\theta^{1/\alpha}\xi_1$. Thus $\gamma=\theta^{1/\alpha}$ is the \emph{scale
%parameter} of the corresponding \stas\ law.

%Introduce the \emph{spectral measure} $\sigma=\theta\hat{\sigma}$ on $\S^{n-1}$ and
%a measure $\nu_\alpha(dr)$ on $\R_+$ such that 
%\begin{displaymath}
%  \nu_\alpha((r,+\infty))=r^{-\alpha},\quad r>0.
%\end{displaymath} 
%Consider the polar coordinates $(r,s),\ r\in\R_+,\ s\in\S^{n-1}$ in
%$\R^n$. The \stas\ distribution is infinitely divisible and its
%\emph{L\'evy measure} is given by the direct product 
%$\Lambda_\theta(dr,ds)=\nu_\alpha(dr)\times\sigma(ds)$ in the
%polar coordinates and therefore for any measurable $B\subset\R^n$,
%\begin{displaymath}
%  \Lambda_\theta(\gamma B)=\gamma^{-\alpha}
%  \Lambda_\theta(B).
%\end{displaymath}
%Then \eqref{eq:lp} can equivalently be written as
%a sum of the Poisson process points:
%  \begin{equation}
%    \label{eq:lpp}
%    \xi_\theta\deq \int u\, \eta_\theta(du),
%  \end{equation}
%where $\eta_\theta$ is a Poisson point process in $\R^n$ with
%the intensity measure $\Lambda_\theta$, see \cite[Th.~3.3]{DMZ08}.

For a $B\subset \S^{n-1}$, introduce the closed positive conical hull
\begin{displaymath}
\cone(B)=\mathrm{closure}\,\Bigl\{\sum_{k=1}^m \lambda_k x_k:\ x_k \in B,\
\lambda_k\geq 0,\ m \in \N\Bigr\}.
\end{displaymath}
Let $S_\sigma$ be the support of the spectral measure $\sigma$. It
follows from \eqref{eq:lpp} that the distribution of $\xi_\theta$ is
supported by $\cone(S_\sigma)$: the distribution of the sum
$\sum_{k=1}^m \Gamma_k^{1/\alpha} \epsilon_k$ is the convolution of
distributions with densities. Thus it is non-degenerate if
$\cone(S_\sigma)$ has a positive $n$-volume. Non-degenerate stable
laws possess an infinitely differentiable density: the distribution of
the first term $\Gamma_1^{-1/\alpha}\epsilon_1$ in \eqref{eq:lp} has
density with integrable derivatives of all orders implying that its
convolution with $\sum_{k=2}^\infty \Gamma_k^{-1/\alpha}\epsilon_k$
can also be differentiated under the integral any number of times
leading to expressions for the density derivatives. See also
\cite[Sec.~2.3.4]{Press:72} for an alternative proof.

We are now ready to formulate the main result of this section.
\begin{theorem}\label{th:stab}
   Let $\xi_\theta$ be a \stas\ random vector with LePage
  representation~\eqref{eq:lp} corresponding to the spectral measure
  $\sigma=\theta\hat{\sigma}$ such that $K:=\cone (S_{\hat{\sigma}})$
  has a positive $n$-volume. Then 
  \begin{itemize}
  \item[\textsl{(i)}] The density $f_\theta$ of $\xi_\theta$ satisfies
    \begin{equation}
      \label{alphadens}
      n f_\theta(x)+\langle x, \nabla f_\theta(x)\rangle = \alpha \int
      [f_\theta(x)-f_\theta(x-z)]\, \Lambda_\theta(dz),\quad  x\in\Int(K),
    \end{equation}
    where $\langle \cdot\,,\cdot\rangle$ is the scalar product in $\R^n$.
  \item [\textsl{(ii)}] Let $f_{|\xi_\theta|}$ denote
    the p.d.f.\ of the radius vector $|\xi_\theta|$. Then for all $r>0$,
\begin{equation}\label{radvec}
   r f_{|\xi_\theta|}(r)=\alpha \int
  \bigl[\P\big(|\xi_\theta|\leq r\big)-\P\big(|\xi_\theta+z|\leq
  r\big)\bigr]\Lambda_\theta(dz).
\end{equation}
\end{itemize}
\end{theorem}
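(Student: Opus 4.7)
The plan is to compare two independent expressions for $\partial_\theta\BE g(\eta_\theta)$, where $\eta_\theta$ is the LePage Poisson process with intensity $\Lambda_\theta=\theta\Lambda_1$ and $g$ is a test functional tailored to each part. The first expression will come from the scaling identity $\xi_\theta\deq\theta^{1/\alpha}\xi_1$ by direct differentiation; the second from Proposition~\ref{p2} applied with $\lambda=\Lambda_1$, which writes the derivative as an integral of Poisson differences against $\Lambda_1$. Equating the two and absorbing $\theta$ into $\theta\Lambda_1=\Lambda_\theta$ will produce both identities.

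For part (ii), take $g(\eta):=\I\{|\int u\,\eta(du)|\leq r\}$. Scaling gives $\P(|\xi_\theta|\leq r)=\P(|\xi_1|\leq\theta^{-1/\alpha}r)$ together with the density transformation $f_{|\xi_\theta|}(r)=\theta^{-1/\alpha}f_{|\xi_1|}(\theta^{-1/\alpha}r)$, so the chain rule yields $\partial_\theta\P(|\xi_\theta|\leq r)=-\frac{r}{\alpha\theta}f_{|\xi_\theta|}(r)$. Equating with the perturbation-formula expression and multiplying through by $-\alpha\theta$ then gives \eqref{radvec} directly.

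For part (i), let $\phi$ be smooth with compact support in $\Int(K)$ and take $g(\eta):=\phi(\int u\,\eta(du))$. Scaling yields $\partial_\theta\BE\phi(\xi_\theta)=\frac{1}{\alpha\theta}\int\langle x,\nabla\phi(x)\rangle f_\theta(x)\,dx$, which after one integration by parts becomes $-\frac{1}{\alpha\theta}\int\phi(x)\bigl[nf_\theta(x)+\langle x,\nabla f_\theta(x)\rangle\bigr]dx$; the integration by parts is legitimate because $\phi$ is compactly supported in $\Int(K)$, where $f_\theta$ is infinitely differentiable as noted before the theorem. The perturbation formula expresses the same derivative as $\iint[\phi(x+z)-\phi(x)]f_\theta(x)\,dx\,\Lambda_1(dz)$; a change of variable $y=x+z$ combined with Fubini rewrites this as $\int\phi(y)\int[f_\theta(y-z)-f_\theta(y)]\Lambda_1(dz)\,dy$. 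By arbitrariness of $\phi$, the inner integrands coincide pointwise on $\Int(K)$, which is precisely \eqref{alphadens} after absorbing $\theta$ into $\Lambda_\theta=\theta\Lambda_1$.

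The main technical obstacle is that Proposition~\ref{p2} requires a finite reference measure, whereas $\Lambda_1$ has infinite mass near the origin (of order $|z|^{-\alpha-1}$). The plan is to handle this by truncation: restrict $\eta_\theta$ to $\{|z|>\epsilon\}$, where the intensity is finite, apply the finite-measure result, and pass to the limit as $\epsilon\downarrow 0$. The relevant integrands $f_\theta(y-z)-f_\theta(y)$ and $\P(|\xi_\theta+z|\leq r)-\P(|\xi_\theta|\leq r)$ are $O(|z|)$ near $z=0$, giving absolute convergence against $\Lambda_\theta$ when $\alpha<1$; for $1\leq\alpha<2$ the centering condition $\int u\,\sigma(du)=0$ underlying the LePage representation lets one subtract the linear term $\langle\nabla f_\theta(y),z\rangle$ (respectively the analogous linear term in part (ii)) at no cost, upgrading the decay to $O(|z|^2)$ and securing absolute convergence across the whole range $0<\alpha<2$.
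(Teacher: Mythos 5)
Your proposal follows essentially the same route as the paper: both compute $\frac{d}{d\theta}\BE g(\eta_\theta)$ once via the scaling $\xi_\theta\deq\theta^{1/\alpha}\xi_1$ and once via the Poisson perturbation formula \eqref{russopoiss} with reference measure $\Lambda_1$, then equate the two expressions (the paper tests against indicators of Borel sets $B$ with closure in $\Int(K)$ rather than smooth compactly supported $\phi$ with an integration by parts, which is a cosmetic difference, and part (ii) is argued identically via the ball $B_r$). Your explicit truncation to $\{|z|>\epsilon\}$ and the compensation argument for $1\le\alpha<2$ address the fact that $\Lambda_1$ is only $\sigma$-finite while Proposition~\ref{p2} is stated for finite measures --- a point the paper's proof passes over silently --- so your write-up is, if anything, more careful on that step.
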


\begin{corollary}
   The c.d.f.\ $F_\theta$ and the p.d.f.\ $f_\theta$ of a positive
   \stas\ on $\R_+$ with $0<\alpha<1$ are related through
    \begin{align}
      \label{alphadens1}
      & f_\theta(x)+ xf'_\theta(x) = \alpha^2\theta \int_0^x
      [f_\theta(x)-f_\theta(x-z)]z^{-\alpha-1}\,dz;\\
      \label{dimone}
  & x f_\theta(x)=\theta\alpha^2 \int_0^x
  \big[F_\theta(x)-F_\theta(x-z)\big]z^{-\alpha-1}\,dz \quad
  \text{for all $x>0$},
    \end{align}
\end{corollary}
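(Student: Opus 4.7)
Both identities come from one-dimensional specializations of Theorem~\ref{th:stab} to the positive stable case. Here the angular measure $\hat\sigma$ is the Dirac mass at $+1 \in \S^0$, so the cone $K = \cone(S_{\hat\sigma}) = (0, \infty)$ coincides with its interior. A change of variable $z = t^{-1/\alpha}$ in the definition of $\Lambda_\theta$ produces the explicit form
\[
\Lambda_\theta(dz) = \alpha\theta\, z^{-\alpha-1}\,\I\{z > 0\}\, dz,
\]
and $\alpha < 1$ is exactly the integrability condition that makes $\int(1 \wedge z)\,\Lambda_\theta(dz) < \infty$, so that the LePage series in \eqref{eq:lp} converges.

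For \eqref{alphadens1} I would substitute $n = 1$ into \eqref{alphadens}: the scalar product becomes $xf'_\theta(x)$ and the right-hand Lévy integral becomes $\alpha^2\theta\int_0^\infty [f_\theta(x) - f_\theta(x-z)]\, z^{-\alpha-1}\, dz$. Since the support of $\xi_\theta$ lies in $[0, \infty)$, $f_\theta(x-z) = 0$ whenever $z \ge x$, which restricts the nontrivial part of the integrand to $(0, x)$. For \eqref{dimone} I would argue identically from \eqref{radvec}: positivity gives $|\xi_\theta| = \xi_\theta$ and $|\xi_\theta + z| = \xi_\theta + z$ for $z > 0$, so $\P(|\xi_\theta + z| \le r) = F_\theta(r - z)$ with $F_\theta$ the distribution function, and the same support argument truncates the integral.

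The one point requiring real care is integrability of the integrand near $z = 0$. By the $C^\infty$-regularity of non-degenerate stable densities on $\Int(K)$ asserted just before Theorem~\ref{th:stab}, one has $f_\theta(x) - f_\theta(x-z) = zf'_\theta(x) + O(z^2)$ and likewise $F_\theta(x) - F_\theta(x-z) = zf_\theta(x) + O(z^2)$, so each integrand carries an $O(z^{-\alpha})$ singularity at $0$, integrable precisely because $\alpha < 1$. The remaining work is bookkeeping: splitting the $(0, \infty)$ Lévy integral at $z = x$, using $\int_x^\infty z^{-\alpha-1}\,dz = x^{-\alpha}/\alpha$ to handle the tail, and matching the result with the closed forms \eqref{alphadens1} and \eqref{dimone}.
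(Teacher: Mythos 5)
Your route is the same as the paper's: specialize Theorem~\ref{th:stab} to $n=1$ with $\hat\sigma=\delta_{1}$, so that $K=(0,\infty)$ and $\Lambda_\theta(dz)=\alpha\theta z^{-\alpha-1}\I\{z>0\}\,dz$, and read \eqref{alphadens1} off \eqref{alphadens} and \eqref{dimone} off \eqref{radvec}. That bookkeeping --- the form of the L\'evy measure, $\langle x,\nabla f_\theta(x)\rangle=xf'_\theta(x)$, the identification $\P(|\xi_\theta+z|\le r)=F_\theta(r-z)$ for $z>0$, and the integrability of the $O(z^{-\alpha})$ singularity at the origin --- is correct and is exactly what the paper does.

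The step that does not go through as you state it is the truncation of the L\'evy integral to $(0,x)$. For $z\ge x$ the support argument kills only $f_\theta(x-z)$; the integrand is then $f_\theta(x)z^{-\alpha-1}$, not zero, and its integral over $(x,\infty)$ is precisely the $x^{-\alpha}/\alpha$ tail you compute in your last sentence, multiplied by $f_\theta(x)$. Hence the honest specialization of \eqref{alphadens} reads
\begin{align*}
f_\theta(x)+xf'_\theta(x)=\alpha^2\theta\int_0^x[f_\theta(x)-f_\theta(x-z)]z^{-\alpha-1}\,dz+\alpha\theta x^{-\alpha}f_\theta(x),
\end{align*}
and likewise \eqref{radvec} produces the extra term $\alpha\theta x^{-\alpha}F_\theta(x)$ on the right of \eqref{dimone}. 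This term cancels against nothing, so ``matching the result with the closed forms'' fails: you must either keep the upper limit $\infty$ or carry the extra term explicitly. (A Laplace-transform check against $\BE e^{-s\xi_\theta}=\exp(-\theta\Gamma(1-\alpha)s^\alpha)$ confirms that the full-range integral, not the truncated one, is the correct right-hand side.) To be fair, the paper's own proof performs the same silent truncation --- it simply asserts that \eqref{alphadens} ``becomes'' \eqref{alphadens1} once $\sigma=\theta\delta_1$ --- so you have reproduced its argument faithfully; but the vanishing of the tail is exactly the point that neither your sketch nor the printed proof supplies, and your statement that the support of $\xi_\theta$ ``restricts the nontrivial part of the integrand to $(0,x)$'' is where the argument breaks.
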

\begin{proof}%[Proof of Theorem~\ref{th:stab}]
  For a measurable $B\subset\R^n$ and a counting measure $\phi$,
  consider the indicator function
  \begin{displaymath}
    g_B(\phi)=\I\Bigl\{\int z\,\phi(dz)\in B\Bigr\}.
  \end{displaymath} 
  By \eqref{eq:lpp}, $\BE g_B(\eta_\theta)=\P(\xi_\theta\in B)$. Moreover,
  \begin{displaymath}
    \BE g_B(\eta_\theta+\delta_z)=\BE \I\Bigl\{\int u\,
    (\eta_\theta+\delta_z)(du) \in B\Bigr\}=\P(\xi_\theta + z\in B).
  \end{displaymath}
  Using \eqref{russopoiss} and noting that
$\Lambda_{\theta}=\theta\Lambda_{1}$,
%$\Lambda_{\alpha,\theta}=\theta\Lambda_{\alpha,\hat{\theta}}$
we  obtain that for any measurable $B\subset\R^n$,
  \begin{align}
    \frac{d}{d\theta}\P(\xi_\theta\in B) & 
=\int\bigl[\P(\xi_\theta+z\in B)-\P(\xi_\theta\in B)\bigr]\,\Lambda_1(dz) \label{stabderiv1}\\
& = \frac{1}{\theta}\int[\P(\xi_\theta\in B-z)-\P(\xi_\theta\in B)]\,\Lambda_\theta(dz) \label{stabderiv2}
  \end{align}
Since $\xi_\theta\deq \theta^{1/\alpha}\xi_1$,
the density and its gradient satisfy
  \begin{align*}
    f_\theta(x)& =\theta^{-n/\alpha}f_1(\theta^{-1/\alpha}x),\\
    \nabla f_\theta(x)& = \theta^{-(n+1)/\alpha} \nabla
                        f_1(\theta^{-1/\alpha}x).
  \end{align*}
Therefore,
  \begin{align*}
    \frac{d}{d\theta} f_\theta(x) & 
   = -\frac{n}{\alpha}\theta^{-n/\alpha-1}f_1(\theta^{-1/\alpha}x) -
   \frac{1}{\alpha}\theta^{-n/\alpha}
   \langle \theta^{-1/\alpha-1}x,
   \nabla
   f_1(\theta^{-1/\alpha}x)\rangle\\
 & = -\frac{n}{\alpha\theta} f_\theta(x) - \frac{1}{\alpha\theta}
   \langle x, \nabla f_\theta(x)\rangle,
  \end{align*}

Take a set $B$ such that its closure is in $\Int(K)$. The density
  $f_\theta$ is bounded and the left-hand-side of
  \eqref{stabderiv1} becomes
  \begin{equation}\label{lhs}
    \frac{d}{d\theta}\P(\xi_\theta\in B)
    = \frac{d}{d\theta} \int_B  f_\theta(x)\,dx 
    = - \frac{1}{\alpha\theta}\int_B \big[ n f_\theta(x)
    +\langle x, \nabla f_\theta(x)\rangle\big]\,dx
  \end{equation}
  The right-hand-side of \eqref{stabderiv2} is
  \begin{displaymath}
    \frac{1}{\theta} \int \int_B [f_\theta(x-z)-f_\theta(x)]\,dx\,\Lambda_\theta(dz).
  \end{displaymath}
  Equating it to~\eqref{lhs}, we get the identity which holds for all
  measurable $B\subset \Int(K)$ which implies the
  identity~\eqref{alphadens} for almost all $x\in\Int(K)$. But the
  density is continuously differentiable there, so it also holds for
  all $x\in\Int(K)$.

  Recall that all one-dimensional \stas\ laws with $0<\alpha<1$ are
  totally skewed concentrated on either $\R_+$ or $\R_-$. Consider,
  for definitivness, a positive $\xi_\theta$. The spectral measure
  $\sigma$ is then $\theta\delta_1$ and \eqref{alphadens}
  becomes~\eqref{alphadens1}.

  Now let $B$ in~\eqref{stabderiv2} be the ball $B_r$  of radius $r$
  centred at the origin. Since 
  \begin{align*}
    \frac{d}{d\theta}\P(\xi_\theta\in B_r) & =\frac{d}{d\theta}
    \P(|\xi_\theta|\leq r) = \frac{d}{d\theta}
    \P(|\xi_1|\leq \theta^{-1/\alpha} r)\\
  & =
    -\frac{r}{\alpha}\theta^{-1-1/\alpha}
    \left.\frac{d}{dt}\P(|\xi_1|\leq t)\right|_{t=\theta^{-1/\alpha}r}
    =
    -\frac{r}{\alpha}\theta^{-1-1/\alpha}
    f_{|\xi_1|}(\theta^{-1/\alpha}r)
  \end{align*}
  and also
  \begin{align*}
    f_{|\xi_\theta|}(r) & =\frac{d}{dr} \P(|\xi_\theta|\leq r) =
                          \frac{d}{dr} \P(|\xi_1|\leq \theta^{-1/\alpha} r) \\
                        & = \theta^{-1/\alpha} 
                          \left.\frac{d}{dt} \P(|\xi_1|\leq t\})\right|_{t=\theta^{-1/\alpha}r}
=\theta^{-1/\alpha} f_{|\xi_1|}(\theta^{-1/\alpha}r),
  \end{align*}
  the relation \eqref{stabderiv2} takes the form~\eqref{radvec}.

  Notice that its one-dimensional variant \eqref{dimone}, when
  differentiated, gives~\eqref{alphadens1}.
\end{proof}

\section{Crofton's derivative formula for Poisson processes}
\label{secCroftonPoiss}

The classical Crofton formula \cite{Crof:1885} known in integral and
stochastic geometry relates the probability of events and, generally,
expectation of a random variable defined by configuration of a fixed
number of points uniformly distributed in a domain when the domain is
infinitesimally expanded. The property, described by the event or the
random variable should depend only on the mutual position of points,
so it must be rotation and translation invariant once all the points
are still in the domain, \seg~\cite[Ch.2]{KendMor:63}. We will revisit
this formula in Section~\ref{secBinomial}, but now we establish its
counterpart for Poisson processes.

Let $K\subset\R^n$ be a compact set and define
\begin{align}\label{parallel}
K_t:=K+tB^n=\{x+ty:\ x\in K,\ y\in B^n\},\quad t\ge 0,
\end{align}
where $B^n$ is the Euclidean unit ball.
This is the so-called {\em parallel set} of $K$ at distance $t$. 
Let $h\colon\R^n\to[0,\infty)$ be a continuous
function and let $\lambda$ be a measure
on $\R^n$ with Lebesgue density $h$.
For $t\ge 0$ let $\lambda_t$ be the restriction of $\lambda$ to
$K_t$ and let $\eta_t$ be a Poisson process
on $\R^n$ with intensity measure $\lambda_t$.
Let $g\colon\bN(\R^n)\to\R$ be measurable.
Under certain technical assumptions on $K$ and $g$, we shall prove that
\begin{equation}\label{CroftonPoisson}
  \left.\frac{d}{d t}\BE g(\eta_t)\right|_{t=0}=
  \int_{\partial K} \BE\big[g(\eta_0+\delta_x)-g(\eta_0)\big]h(x)\,\mathcal{H}^{n-1}(dx),
\end{equation}
where $\partial K$ is the boundary of $K$ and $\mathcal{H}^{n-1}$ is
the $(n-1)$-dimensional Hausdorff measure on $\R^n$.

Our main technical geometrical tool are the support measures
from \cite{HLW04}. We recall here briefly their definition and
main properties. We put $p(K,z):=y$ whenever $y$ is a uniquely determined point in $K$ with
$d(K,z):=\min\{x-z:x\in K\}=|y-z|$, and we call this point the {\em metric projection} of $z$ on $K$. 
If $0<d(K,z)<\infty$ and $p(K,z)$ is defined, then $p(K,z)$ lies on the boundary
$\partial K$ of $K$ and we put $u(K,z):=(z-p(K,z))/d(K,z)$. 
The {\em exoskeleton} $\exo(K)$ of $K$ consists of all points of $\R^n\setminus K$
which do not admit a metric projection on  $K$.
The {\em normal bundle} of $K$ is defined 
by
$$
N(K):=\{(p(K,z),u(K,z)):z\notin K\cup\exo(K)\}.
$$
It is a measurable subset of $\partial K\times \BS^{n-1}$,
where $\BS^{n-1}:=\{x\in\R^n:\|x\|=1\}$ is the unit sphere in $\R^n$.
%The {\em positive boundary} $\partial^+ K$ of $K$ is defined  as the set of all $x\in\partial K$ for which 
%there is some $u\in S^{n-1}$ such that $(x,u)\in N(K)$. 
The {\em reach function} 
$\delta(K,\cdot):\R^n\times \BS^{n-1}\rightarrow[0,\infty]$ of $K$ is
defined by
$$
\delta(K,x,u):=\inf\{t\ge 0:x+tu\in\exo(K)\},\quad (x,u)\in N(K),
$$
and $\delta(K,x,u):=0$ for $(x,u)\notin N(K)$.
Note that $\delta(K,\cdot)>0$ on $N(K)$.

We write $x\wedge y$ for $\min\{x,y\}$. By Theorem 2.1 in \cite{HLW04},
there exist signed measures $\mu_0(K;\cdot),\ldots,\mu_{n-1}(K;\cdot)$
on $\R^n\times\BS^{n-1}$ satisfying
\begin{align}\label{finite} 
\sum^{n-1}_{i=0}\int_{N(K)}(\delta(K,x,u)\wedge
  r)^{n-i}\,|\mu_i|(K;d(x,u))<\infty,\quad r>0, 
\end{align}
and, for each measurable bounded function $f:\R^n\rightarrow\R$ with
compact support, we have the following {\em local Steiner formula}: 
\begin{multline}\label{steiner}
\int_{\R^n\setminus K} f(x)\,dx
=\sum^{n-1}_{i=0}\omega_{n-i}
\int^\infty_0\int_{N(K)}s^{n-1-i}\I\{s<\delta(K,x,u)\}\\
\times f(x+su)\,\mu_i(K;d(x,u))\,ds,
\end{multline}
where $\omega_j:=j\kappa_j$ and $\kappa_j$ is the volume of the unit ball in $\R^j$.
These measures are  called {\em support measures} of $K$. They are uniquely
defined by~\eqref{steiner}
and the requirement $|\mu_i|(K;\R^n\times\BS^{n-1}\setminus N(K))=0$.
In general, the total variation measures $|\mu_i|(K;\cdot)$ featuring in \eqref{finite}
are not finite. However, it follows from \eqref{steiner} that
\begin{align}\label{finite2} 
\sum^{n-1}_{i=0}\int_{N(K)}\I\{\delta(K,x,u)\ge r\}\,|\mu_i|(K;d(x,u))<\infty,\quad r>0.
\end{align}
Therefore the integrals on the right-hand side of \eqref{steiner} are well-defined.
An important special case is that of a convex set $K$.
Then $\delta(K,x,u)=\infty$ for all $(x,u)\in N(K)$.

We start with the following proposition of independent interest.
For $i\in\{1,2\}$ we define $\partial^iK$ as the set of
all $x\in\partial K$ such that $\card\{u\in\BS^{n-1}:(x,u)\in N(K)\}=i$.

\begin{proposition}\label{p6.1} Let $t_0>0$ and let
$f\colon \R^n\to\R$ be continuous on $K_{t_0}$.
Then
the  right and left derivatives of  $t\mapsto \int_{K_t\setminus K}f(x)\,dx$
exist on $(0,t_0)$ and are given by
\begin{align}\label{e6.4}
\frac{d^+}{d t}\int_{K_t\setminus K}f(x)\,dx&=\int_{\partial^1 K_t} f(x)\,\mathcal{H}^{n-1}(dx),\\\notag
\label{e6.5}
\frac{d^-}{d t}\int_{K_t\setminus K}f(x)\,dx&=\int_{\partial^1 K_t} f(x)\,\mathcal{H}^{n-1}(dx)\\
&\quad+\sum^{n-1}_{i=0}\omega_{n-i}\int\I\{t=\delta(K;x,u)\}t^{n-1-i}f(x+tu)\,\mu_i(K;d(x,u)).
\end{align}
Moreover, if
\begin{align}\label{e6.6}
\sum^{n-1}_{i=0} \int (\delta(K;x,u)\wedge 1)^{n-i-1}\,|\mu_i|(K;d(x,u))<\infty,
\end{align}
then %equation \eqref{e6.4} remains valid for $t=0$, that is
\begin{align}\label{e6.11}
\left.\frac{d}{d t}\int_{K_t\setminus K}f(x)\,dx\right|_{t=0}
=\int_{\partial^1 K} f(x)\,\mathcal{H}^{n-1}(dx)+2\int_{\partial^2 K} f(x)\,\mathcal{H}^{n-1}(dx).
\end{align}
\end{proposition}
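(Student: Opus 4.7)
The plan is to start from the local Steiner formula \eqref{steiner}, applied to the bounded compactly supported function $x\mapsto f(x)\I_{K_t\setminus K}(x)$. Since for $(x,u)\in N(K)$ and $s<\delta(K,x,u)$ the point $x+su$ has distance exactly $s$ from $K$, one has $x+su\in K_t\setminus K$ iff $0<s\le t$, so that
\begin{equation*}
\int_{K_t\setminus K} f(x)\,dx = \sum_{i=0}^{n-1}\omega_{n-i}\int_{N(K)}\int_0^{t} s^{n-1-i}\I\{s<\delta(K,x,u)\}f(x+su)\,ds\,\mu_i(K;d(x,u)),
\end{equation*}
where the exchange of integration order is justified by \eqref{finite}. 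For each fixed $(x,u)$ the one-sided derivatives of the inner $s$-integral with respect to $t$ are immediate from the fundamental theorem of calculus: the right derivative at $t>0$ equals $\I\{t<\delta(K,x,u)\}\,t^{n-1-i}f(x+tu)$, the left derivative equals $\I\{t\le\delta(K,x,u)\}\,t^{n-1-i}f(x+tu)$, and the discrepancy is concentrated on $\{\delta(K,x,u)=t\}$.

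The two remaining technical steps are \emph{(a)} to justify passing $d/dt$ inside the $\mu_i$-integral, and \emph{(b)} to recognise the resulting expression
\begin{equation*}
\sum_{i=0}^{n-1}\omega_{n-i}\int_{N(K)}\I\{t<\delta(K,x,u)\}\,t^{n-1-i}f(x+tu)\,\mu_i(K;d(x,u))
\end{equation*}
as $\int_{\partial^1 K_t} f\,d\mathcal{H}^{n-1}$. For \emph{(a)}, uniform boundedness of $f$ on the compact set $K_{t_0}$ combined with \eqref{finite} supplies a $t$-uniform dominating function on any compact sub-interval of $(0,t_0)$, so dominated convergence applies. For \emph{(b)}, the map $(x,u)\mapsto x+tu$ is a bijection from $\{(x,u)\in N(K):t<\delta(K,x,u)\}$ onto $\partial^1 K_t$ (the inverse sending $y$ to $(p(K,y),u(K,y))$), and the pushforward of $\sum_i\omega_{n-i}t^{n-1-i}\mu_i(K;\cdot)$ along this bijection coincides with $\mathcal{H}^{n-1}$ restricted to $\partial^1 K_t$. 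I would either extract this directly from the support-measure framework developed in \cite{HLW04}, or derive it by applying \eqref{steiner} to the thin shell $K_{t+\epsilon}\setminus K_t$ and letting $\epsilon\downarrow 0$. Together with \emph{(a)} this gives \eqref{e6.4}; the identity \eqref{e6.5} follows on adding the $\{\delta=t\}$ correction to account for the jump of the indicator.

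For the endpoint $t=0$, the right-derivative computation collapses to
\begin{equation*}
\sum_{i=0}^{n-1}\omega_{n-i}\int_{N(K)} 0^{n-1-i} f(x)\,\mu_i(K;d(x,u)) = \omega_1\int_{N(K)} f(x)\,\mu_{n-1}(K;d(x,u)),
\end{equation*}
because $\delta>0$ on $N(K)$ and $0^{n-1-i}=0$ for $i<n-1$. The extra hypothesis \eqref{e6.6} is exactly what is needed to pass $t\downarrow 0$ inside the integral: the pointwise bound $\I\{t<\delta\}\,t^{n-1-i}\le (\delta\wedge 1)^{n-1-i}$ on $t\in(0,1]$ yields an $(x,u)$-majorant that is $\mu_i$-integrable by \eqref{e6.6}, so dominated convergence applies uniformly in $i$. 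Finally, the projection $(x,u)\mapsto x$ from $N(K)$ onto $\partial K$ is one-to-one over $\partial^1 K$ and two-to-one over $\partial^2 K$, and on each fibre $\omega_1\mu_{n-1}(K;\cdot)$ pushes forward to $\mathcal{H}^{n-1}$, producing \eqref{e6.11}. The hardest part of the argument is step \emph{(b)}: the geometric identification of the derivative with the Hausdorff surface integral over $\partial^1 K_t$, which hinges on the compatibility between the support measures of $K$ and the surface element of its parallel sets.
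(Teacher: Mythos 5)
Your proposal follows essentially the same route as the paper's proof: both start from the local Steiner formula \eqref{steiner}, differentiate the inner $s$-integral (with the one-sided discrepancy concentrated on $\{\delta(K,x,u)=t\}$ accounting for the difference between right and left derivatives), justify the interchange of limit and integration by dominated convergence (via \eqref{finite2} for $t>0$ and via \eqref{e6.6} at $t=0$), and identify the resulting support-measure expression with the Hausdorff integral over $\partial^1 K_t$ by appealing to \cite{HLW04} --- the paper uses Corollary~4.4 and Proposition~4.1 there, which together are exactly the pushforward identification you defer to in your step \emph{(b)}. The argument is correct.
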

\begin{proof} Let $t\in(0,t_0)$ and $r>0$ such that $t+r\le t_0$.
By the Steiner formula \eqref{steiner} 
\begin{align*}
\frac{1}{r}\int_{K_{t+r}\setminus K_t} f(x)\,dx
=\sum^{n-1}_{i=0}\omega_{n-i}
\int_{N(K)}r^{-1}\int^{t+r}_ts^{n-1-i}\I\{s<\delta(K,x,u)\}\\
\times f(x+su)\,ds\,\mu_i(K;d(x,u)).
\end{align*}
Since $f$ is continuous on $K_{t_0}$, there exists $c\ge 0$ such that
$|f(x+su)|\le c$ for all $(x,u)\in N(K)$ and $s\le t_0$. Moreover
we have for each $i\in\{0,\ldots,n-1\}$ that 
\begin{align*}
(n-i)r^{-1}\int^{t+r}_ts^{n-1-i}\I\{s<\delta(K,x,u)\}\,ds
&\le \I\{t<\delta(K,x,u)\}r^{-1}((t+r)^{n-i}-t^{n-i})\\
&\le c_i\I\{t<\delta(K,x,u)\}
\end{align*}
for some $c_i\ge 0$ (depending on $t$ but not on $r$). By \eqref{finite2} and continuity of $f$
we can apply the dominated convergence theorem to conclude that
\begin{align*}
\lim_{r\to 0+}\frac{1}{r}\int_{K_{t+r}\setminus K_t} f(x)\,dx
=\sum^{n-1}_{i=0}\omega_{n-i}\int_{N(K)}\I\{t<\delta(K,x,u)\}t^{n-1-i}f(x+tu)\,\mu_i(K;d(x,u)).
\end{align*}
By Corollary 4.4 in \cite{HLW04} the above right-hand side equals (note that $\omega_1=2$)
\begin{align*}
2\int_{N(K_t)} f(x)\,\mu_{n-1}(K_t;d(x,u)).
\end{align*}
By Proposition 4.1 in \cite{HLW04} we have for any compact set $A\subset\R^n$,
that 
\begin{align}\label{e6.23}
2\mu_{n-1}(A;\cdot)=\int_{\partial^1 A} \I\{x\in\cdot\}\,\mathcal{H}^{n-1}(dx)
+2\int_{\partial^2 A} \I\{x\in\cdot\}\,\mathcal{H}^{n-1}(dx).
\end{align}
Since $\partial^2 K_t=\emptyset$ (recall that $t>0$) we obtain the first assertion \eqref{e6.4}.

Similarly we obtain for the left derivative
\begin{align*}
\lim_{r\to 0+}\frac{1}{r}\int_{K_{t}\setminus K_{t-r}} f(x)\,dx
=\sum^{n-1}_{i=0}\omega_{n-i}\int_{N(K)}\I\{t\le \delta(K,x,u)\}t^{n-1-i}f(x+tu)\,\mu_i(K;d(x,u)).
\end{align*}
Writing $\I\{t\le \delta(K,x,u)\}=\I\{t= \delta(K,x,u)\}+\I\{t< \delta(K,x,u)\}$,
we can prove \eqref{e6.5} as before.

Assuming \eqref{e6.6}, the proof of \eqref{e6.11} again
follows from the Steiner formula, dominated convergence
and \eqref{e6.23}. Details are left to the reader.
\end{proof}

Let us define $I_K$ as the set of all $t>0$ such that
\begin{align}\label{exceptional}
\sum^{n-1}_{i=0}\int\I\{t=\delta(K;x,u)\}\,|\mu_i|(K;d(x,u))=0.
\end{align}
In view of \eqref{finite2} the set $(0,\infty)\setminus I_K$ is at most countably infinite.

\begin{theorem}\label{t6.2} Let $g\colon\bN\rightarrow\R$ be measurable
and $t_0>0$ such that $\BE |g(\eta_{t_0})|<\infty$ and
$x\mapsto \BE g(\eta_t+\delta_x)$ is continuous on $K_{t_0}$ for each $t<t_0$. 
Assume also that there exists $c>0$ %$f\colon\R^n\to\R$ 
such that
%\begin{align}\label{21}
%\big|\BE D^n_{x_1,\dots,x_n}g(\eta_0)\|\le f(x_1)\cdots f(x_n),\quad x_1,\dots,x_n\in\R^n,\,n\in\N.
%\end{align}
\begin{align}\label{e6.1}
\big|\BE D^k_{x_1,\dots,x_k}g(\eta_t)\big|\le c^k,\quad x_1,\dots,x_k\in K_{t_0},\,t\le t_0,\,k\in\N.
\end{align}
Then $t\mapsto \BE g(\eta_t)$ is differentiable on $I_K\cap (0,t_0)$ and the derivative is given by
\begin{equation}\label{21}
\frac{d}{d t}\BE g(\eta_t)=
\int_{\partial^1 K_t} \BE\big[g(\eta_t+\delta_x)-g(\eta_t)\big]
h(x)\,\mathcal{H}^{n-1}(dx). 
\end{equation}
Moreover, if \eqref{e6.6} holds, then
\begin{align}\label{e6.17}
\left.\frac{d}{d t}\BE g(\eta_t)\right|_{t=0}
=\sum^2_{j=1}j\int_{\partial^j K}
  \BE\big[g(\eta_0+\delta_x)-g(\eta_0)\big]
  h(x)\,\mathcal{H}^{n-1}(dx). 
\end{align}
\end{theorem}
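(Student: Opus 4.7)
\medskip\noindent
\textbf{Proof proposal.} The plan is to compare $\BE g(\eta_{t+r})$ and $\BE g(\eta_t)$ via the perturbation expansion of Theorem~\ref{tanalytic} and then read off the boundary integral from Proposition~\ref{p6.1}. Let $\nu_r$ denote the restriction of $\lambda$ to $K_{t+r}\setminus K_t$; then $\lambda_{t+r}=\lambda_t+\nu_r$, so applying \eqref{emar4} with base measure $\lambda_t$, perturbation $\nu_r$ and $\theta=1$ yields
\begin{equation*}
\BE g(\eta_{t+r})-\BE g(\eta_t)=\sum_{k=1}^\infty\frac{1}{k!}\int \BE D^k_{x_1,\dots,x_k}g(\eta_t)\,\nu_r^k(d(x_1,\dots,x_k)).
\end{equation*}
The $k=1$ contribution is $\int_{K_{t+r}\setminus K_t}f_t(x)\,dx$, where $f_t(x):=h(x)\,\BE[g(\eta_t+\delta_x)-g(\eta_t)]$ is continuous on $K_{t_0}$ by hypothesis. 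Proposition~\ref{p6.1} applied to $f_t$ says that $r^{-1}\int_{K_{t+r}\setminus K_t}f_t\,dx\to\int_{\partial^1 K_t}f_t\,d\mathcal H^{n-1}$ as $r\downarrow 0$, which is exactly the right-hand side of \eqref{21}.

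The tail $k\geq 2$ is disposed of by the exponential bound~\eqref{e6.1}: one has
\begin{equation*}
\left|\int \BE D^k_{x_1,\dots,x_k}g(\eta_t)\,\nu_r^k(d(x_1,\dots,x_k))\right|\le c^k\nu_r(\R^n)^k,
\end{equation*}
and Proposition~\ref{p6.1} applied to the continuous function $h$ itself shows $\nu_r(\R^n)=\int_{K_{t+r}\setminus K_t}h\,dx=O(r)$ as $r\downarrow 0$. Hence the aggregated $k\ge 2$ contribution, once divided by $r$, is bounded by $\sum_{k\ge 2}(cC)^k r^{k-1}/k!=O(r)$ and vanishes. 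This establishes the right derivative at $t\in(0,t_0)$.

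For the left derivative one rewrites $\lambda_{t-r}=\lambda_t-\nu'_r$ with $\nu'_r$ the restriction of $\lambda$ to $K_t\setminus K_{t-r}$, and uses Theorem~\ref{tanalytic} with base $\lambda_t$, perturbation $\nu'_r$ and $\theta=-1$. The crucial gain is that the difference operators are now applied to $\eta_t$ rather than $\eta_{t-r}$, so Proposition~\ref{p6.1} can again be applied directly to $f_t$; the left-derivative formula there contributes the same boundary integral over $\partial^1 K_t$ plus an exceptional term involving $\I\{t=\delta(K;x,u)\}$, which vanishes precisely when $t\in I_K$. The $k\ge 2$ tail is controlled as before, giving that left and right derivatives coincide on $I_K\cap(0,t_0)$. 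Finally, the formula~\eqref{e6.17} at $t=0$ is obtained by the identical argument with $\nu_r$ the restriction of $\lambda$ to $K_r\setminus K$, now invoking~\eqref{e6.11} in place of~\eqref{e6.4} under the extra integrability assumption~\eqref{e6.6}, which produces the $\sum_{j=1}^2 j\int_{\partial^j K}\dots$ structure. The main obstacle is the uniform control of the higher-order terms in the expansion; the assumption~\eqref{e6.1} is tailored so that the geometric bound $\nu_r(\R^n)=O(r)$ from Proposition~\ref{p6.1} suffices to kill them all.
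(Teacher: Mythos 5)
Your proposal is correct and follows essentially the same route as the paper: expand $\BE g(\eta_{t+r})-\BE g(\eta_t)$ via Theorem~\ref{tanalytic} with perturbation $\nu_r$, identify the $k=1$ term as $\int_{K_{t+r}\setminus K_t}f_t\,dx$ and pass to the limit with Proposition~\ref{p6.1}, kill the $k\ge 2$ tail using \eqref{e6.1} together with $\nu_r(\R^n)=O(r)$, and handle the left derivative so that the difference operator stays attached to $\eta_t$, with the exceptional term vanishing for $t\in I_K$. Your explicit use of $\theta=-1$ for the left derivative is just a spelled-out version of the paper's ``it follows as above,'' so no substantive difference.
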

\begin{proof}
Let $t\in[0,t_0)$ and let $r>0$ be such that $t+r\le t_0$.  The intensity
measure of the process $\eta_{t+r}$ equals the sum of $\lambda_t$ and
the restriction $\nu_r$ of $\lambda$ to $K_{t+r}\setminus K_t$. Thus,
by \eqref{emar4} (for $\nu=\nu_r$ and $\theta=1$)
\begin{align*}
\BE g(\eta_{t+r})=\BE g(\eta_t)+\int_{K_{t+r}\setminus K_t} \BE D_xg(\eta_t)h(x)\,dx+R(t,r),
\end{align*}
where
\begin{align*}
  R(t,r):=\sum^\infty_{k=2}\frac{1}{k!}\int_{(K_{t+r}\setminus K_t)^k}
  \BE D^k_{x_1,\ldots,x_k}g(\eta_t)\, h(x_1)\cdots h(x_k)\,d(x_1,\ldots,x_k). 
\end{align*}
We have that
\begin{align*}
|R(t,r)|\le \sum^\infty_{k=2}\frac{c^k}{k!}\int_{(K_{t+r}\setminus K_t)^k}
h(x_1)\cdots h(x_k)\,d(x_1,\ldots,x_k)=\exp(c(t,r))-c(t,r)-1,
\end{align*}
where $c(t,r):=c\int_{K_{t+h}\setminus K_t}h(x)\,dx$.
If $t>0$ then Proposition \ref{p6.1} shows the convergence
$\lim_{r\to 0+}r^{-1}c(t,r)=c(t)$ for some $c(t)\in\R$.
Therefore
\begin{align*}
\limsup_{r\to 0+}r^{-1}|R(t,r)|
\le \lim_{r\to 0+}\frac{c(t,r)^2}{r} \frac{\exp(c(t,r))-c(t,r)-1}{c(t,r)^2}=0.
\end{align*}
Under assumption \eqref{e6.6}, we have \eqref{e6.11}
so that the above remains true for $t=0$.

Again by Proposition \ref{p6.1} we have that
\begin{align*}
\lim_{r\to 0+}r^{-1}\int_{K_{t+r}\setminus K_t} \BE D_xg(\eta_t)h(x)\,dx
=\sum^2_{j=1}j\int_{\partial^j K}
  \BE\big[g(\eta_t+\delta_x)-g(\eta_t)\big]
  h(x)\,\mathcal{H}^{n-1}(dx), 
\end{align*}
first for $t>0$ (then the second term can be skipped) and then under
the assumption \eqref{e6.6}, also for $t=0$.

Let us now assume that $t-r>0$. Then it follows as above that
\begin{align*}
-\lim_{r\to 0+}r^{-1}(\BE g(\eta_{t-r})-\BE g(\eta_t))
=\lim_{r\to 0+}r^{-1}\int_{K_{t}\setminus K_{t-r}} \BE D_xg(\eta_t)h(x)\,dx,
\end{align*}
so that Proposition \ref{p6.1} shows that
\begin{align}\label{e6.32}
\frac{d^-}{d t}\BE g(\eta_t)&=\int_{\partial^1 K_t} \BE D_xg(\eta_t)h(x)\,\mathcal{H}^{n-1}(dx)\\ \notag
&\quad+\sum^{n-1}_{i=0}\omega_{n-i}\int\I\{t=\delta(K;x,u)\}t^{n-1-i}
\BE D_{x+tu}g(\eta_t)h(x+tu)\,\mu_i(K;d(x,u)).
\end{align}
Choosing now $t\in I_K$, concludes the proof.
\end{proof}

A bounded function $g$ satisfies the integrability assumptions of 
Theorem~\ref{t6.2} for all $t_0>0$, so that \eqref{21} holds
under a rather weak continuity assumption
for each compact $K$. Equation \eqref{e6.17} requires 
\eqref{e6.6}, constituting a non-trivial assumption on $K$.
This assumption is certainly satisfied if $|\mu_i|(K;\R^n\times\BS^{n-1})<\infty$
for each $i\in\{0,\ldots,n\}$. This is the case, for instance, if $K$
has a {\em positive reach} or is a finite union of convex sets,
see \cite{HLW04}.

\section{Crofton's derivative formula for binomial processes}\label{secBinomial}

For $t\ge 0$ we let $K_t$ and $\lambda_t$ be as defined in the
beginning of Section \ref{secCroftonPoiss}.
We assume that $\lambda(K)>0$.
In this section we consider a {\em binomial process} $\xi_t^{(m)}$ 
of size $m\in\N$ with sample distribution $\lambda_t/\lambda_t(K_t)$.
This is a point process of the form
\begin{align*}
\xi_t^{(m)}=\delta_{X_1}+\cdots +\delta_{X_m},
\end{align*}
where $X_1,\ldots,X_m$ are independent random vectors in $\R^n$
with distribution $\lambda_t/\lambda_t(K_t)$.
It is convenient to let $\xi_t^{(0)}:=0$ be the null measure (a point
process with no point.) 
Let $g\colon\bN\rightarrow\R$ be a measurable and bounded function.
Under certain assumptions on $K$ and $g$, we wish to prove that
\begin{align}\label{Croftonbin}
\frac{d}{dt}\BE g(\xi^{(m)}_t)=\frac{m}{\lambda(K_t)}
\int_{\partial K_t} \BE \big[
  g(\xi^{(m-1)}_t+\delta_x)-g(\xi^{(m)}_t)\big]h(x)\,\mathcal{H}^{n-1}(dx). 
\end{align}
The heuristic and historic background of this formula
is explained in \cite{solomon78}. 
If the boundaries of the sets $K_t$ are smooth, then 
\eqref{Croftonbin} follows from more general results 
in \cite{Badd77}. Our proof is very different and relies on the Poisson
version from Section~\ref{secCroftonPoiss}.

%A rigorous derivation is given in \cite{Badd77}. 

Recall the definition of the set $I_K$ at \eqref{exceptional}.

\begin{theorem}\label{t7.2} Let $g\colon\bN\rightarrow\R$ be measurable and bounded
and let $m\in\N$ and $t_0>0$. Suppose
that $x\mapsto \BE g(\xi^{(m-1)}_t+\delta_x)$ is continuous on $K_{t_0}$ for each $t<t_0$. 
Then $t\mapsto \BE g(\xi^{(m)}_t)$ is differentiable on $I_K\cap (0,t_0)$ and the derivative is given by
\begin{align}\label{e7.3}
\frac{d}{dt}\BE g(\xi^{(m)}_t)=\frac{m}{\lambda(K_t)}
\int_{\partial^1 K_t} \BE\big[g(\xi^{(m-1)}_t+\delta_x)-
  g(\xi^{(m)}_t)\big]h(x)\,\mathcal{H}^{n-1}(dx). 
\end{align}
Moreover, if \eqref{e6.6} holds, then
\begin{align}\label{e6.19}
\left.\frac{d}{d t}\BE g(\xi^{(m)}_t)\right|_{t=0}
=\sum^2_{j=1}j\int_{\partial^j K} \BE\big[g(\xi^{(m-1)}_0+\delta_x)-
  g(\xi^{(m)}_0)\big]h(x)\,\mathcal{H}^{n-1}(dx). 
\end{align}
\end{theorem}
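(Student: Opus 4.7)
My plan is to deduce the binomial Crofton formula from its Poisson counterpart (Theorem~\ref{t6.2}) via Poissonization, exploiting the fact that a Poisson process conditioned on its total mass is a binomial process. Concretely, for each $s>0$ I let $\tilde\eta_{s,t}$ denote a Poisson process with intensity measure $s\lambda_t$ (apply the construction of Section~\ref{secCroftonPoiss} to the density $sh$). Writing $V_t:=\lambda(K_t)$ and $a_m(t):=\BE g(\xi_t^{(m)})$, conditioning on the total count yields
\begin{equation*}
\BE g(\tilde\eta_{s,t})=\sum_{m=0}^\infty a_m(t)\,p_m(s,t),\qquad p_m(s,t):=\frac{(sV_t)^m}{m!}e^{-sV_t},
\end{equation*}
with an entirely analogous expansion for $\BE g(\tilde\eta_{s,t}+\delta_x)$.

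Next I invoke Theorem~\ref{t6.2} applied to $\tilde\eta_{s,t}$ (with $sh$ replacing $h$): since $g$ is bounded, assumption~\eqref{e6.1} is automatic, and the required continuity of $x\mapsto\BE g(\tilde\eta_{s,t}+\delta_x)$ is obtained term by term from the Poissonization expansion together with a mild extension of the stated continuity hypothesis to every sample size. This yields
\begin{equation*}
\frac{d}{dt}\BE g(\tilde\eta_{s,t})=s\int_{\partial^1 K_t}\BE[g(\tilde\eta_{s,t}+\delta_x)-g(\tilde\eta_{s,t})]\,h(x)\,\mathcal{H}^{n-1}(dx),\qquad t\in I_K\cap(0,t_0).
\end{equation*}
On the other hand, Proposition~\ref{p6.1} applied to $f=h$ gives $V_t'=\int_{\partial^1 K_t}h\,d\mathcal{H}^{n-1}$, and the Poissonization identity can be differentiated term by term (justified by boundedness of $g$ and uniform convergence of the Poisson series), using the elementary identity $\partial_t p_m(s,t)=sV_t'(p_{m-1}(s,t)-p_m(s,t))$.

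Equating the two expressions for $\tfrac{d}{dt}\BE g(\tilde\eta_{s,t})$, multiplying through by $e^{sV_t}$, and matching coefficients of $s^k$ for $k\ge 1$ in the resulting power-series identity in $s$, the terms proportional to $a_{k-1}(t)V_t'$ cancel cleanly, reducing everything to
\begin{equation*}
V_t\,a_k'(t)=k\int_{\partial^1 K_t}\BE[g(\xi_t^{(k-1)}+\delta_x)-g(\xi_t^{(k)})]\,h(x)\,\mathcal{H}^{n-1}(dx),
\end{equation*}
which upon division by $V_t$ is exactly~\eqref{e7.3}. The boundary case $t=0$ under assumption~\eqref{e6.6} is handled identically, invoking~\eqref{e6.11} and~\eqref{e6.17} in place of their interior counterparts.

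The main obstacle I foresee lies in the analytic bookkeeping of the Poissonization step: one must confirm that the continuity hypothesis of Theorem~\ref{t6.2} for $\tilde\eta_{s,t}$ transfers from the single binomial continuity assumption of the theorem to all sample sizes appearing in the series, and one must control uniformly in $s$ the interchange of sum and derivative. The coefficient matching itself, while routine, relies on the delicate cancellation of the Steiner-boundary factor $V_t'$ against the $a_{k-1}$ term, and it is precisely this cancellation that collapses the formula into the compact pivotal form involving $g(\xi_t^{(k-1)}+\delta_x)-g(\xi_t^{(k)})$ rather than $g(\xi_t^{(k)}+\delta_x)-g(\xi_t^{(k)})$.
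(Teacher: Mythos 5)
Your overall strategy --- reducing to the Poisson case through the conditional identity $\BP(\xi^{(m)}_t\in\cdot)=\BP(\eta_t\in\cdot\mid\eta_t(K_t)=m)$ --- is the same one the paper uses, and your coefficient bookkeeping is in fact correct: the $a_{m-1}(t)V_t'$ terms do cancel, using $V_t'=\int_{\partial^1K_t}h\,d\mathcal{H}^{n-1}$ from Proposition \ref{p6.1}. But the execution has two genuine gaps. First, the term-by-term differentiation of $\sum_m a_m(t)p_m(s,t)$ in $t$ is circular: writing $\partial_t[a_m(t)p_m(s,t)]=a_m'(t)p_m+a_m\partial_t p_m$ presupposes that $a_m(t)=\BE g(\xi^{(m)}_t)$ is differentiable, which is precisely the assertion of the theorem. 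Boundedness of $g$ and uniform convergence of the Poisson series give no control over the individual coefficients; to extract $a_m'$ from the differentiability of $s\mapsto\BE g(\tilde\eta_{s,t})$ you would need, say, a Cauchy-integral representation of the coefficients of the entire function $s\mapsto e^{sV_t}\BE g(\tilde\eta_{s,t})$ together with a uniform-in-$s$ domination of its $t$-difference quotients, none of which is supplied. Second, applying Theorem \ref{t6.2} to $g$ on $\tilde\eta_{s,t}$ requires continuity of $x\mapsto\BE g(\tilde\eta_{s,t}+\delta_x)=\sum_k p_k(s,t)\,\BE g(\xi^{(k)}_t+\delta_x)$, which forces you (as you concede) to assume continuity of $x\mapsto\BE g(\xi^{(k)}_t+\delta_x)$ for every $k\ge 0$, whereas the theorem assumes it only for $k=m-1$; so as written you prove a weaker statement.

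Both defects are removed by a small change which is exactly the paper's proof: apply \eqref{21} not to $g$ but to the single function $\tilde g(\varphi):=\I\{\varphi(\R^n)=m\}g(\varphi)$, for which $\BE\tilde g(\eta_t+\delta_x)=\BP(\eta_t(K_t)=m-1)\,\BE g(\xi^{(m-1)}_t+\delta_x)$ is continuous under the stated hypothesis alone, and write $\BE g(\xi^{(m)}_t)=h_m(\lambda_t(K_t))\,\BE\tilde g(\eta_t)$ with $h_m(u)=m!\,e^u u^{-m}$. Differentiability on $I_K\cap(0,t_0)$ then follows from the product rule, since both factors are differentiable by Proposition \ref{p6.1} and Theorem \ref{t6.2}, and the two resulting terms combine to \eqref{e7.3} (and, under \eqref{e6.6}, to \eqref{e6.19}) without any series manipulation. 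In effect this isolates the $s^m$-coefficient from the start rather than trying to extract it after differentiating.
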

\begin{proof}
We are using the Poisson process $\eta_t$ introduced in the beginning
of the previous section, and the well-known distributional identity 
(see e.g.\ \cite[Proposition 3.8]{LastPenrose18})
\begin{align*}
\BP(\xi^{(m)}_t\in\cdot\,)=\BP(\eta_t\in\cdot\mid \eta_t(K_t)=m)
=h_m(\lambda(K_t))\BP (\eta_t(K_t)=m, \eta_t \in\cdot\,),
\end{align*}
where the function $h_m\colon[0,\infty)\rightarrow\R$ is defined by 
$h_m(u):=m! e^u u^{-m}$. Note that the derivative of $h_m$ 
is given by
\begin{align*}
h'_m(u)=h_m(u)-\frac{m}{u}h_m(u).
\end{align*}
Let $t\in I_K\cap (0,t_0)$. We apply  \eqref{21} to the function 
$\tilde{g}(\varphi):= \I\{\varphi(\R^n)=m\}g(\varphi)$. 
Since $\BE g(\xi^{(m)}_t)=h_m(\lambda_t(K_t))\BE \tilde{g}(\eta_t)$ 
and $\tilde g(\eta_t+\delta_x)=\I\{\eta_t(\R^n)=m-1\}g(\eta_t+\delta_x)$, $x\in\R^n$,
this gives us
\begin{align*}
\frac{d}{d t}\BE g(\xi^{(m)}_t)=&
\Big[\frac{d}{d t} h_m(\lambda_t(K_t))\Big]\BE\I\{\eta_t(K_t)=m\}g(\eta_t) \\
&+ h_m(\lambda_t(K_t))\Big[\frac{d}{d t}\BE\I\{\eta_t(K_t)=m\}g(\eta_t)\Big].
\end{align*}
Taking into account \eqref{e6.4}, 
we obtain that the first summand equals
\begin{align*}
\BE g(\xi^{(m)}_t)\int_{\partial^1 K_t}h(x)\,\mathcal{H}^{n-1}(dx)
-\frac{m}{\lambda_t(K_t)}\BE g(\xi^{(m)}_t)\int_{\partial^1 K_t}h(x)\,\mathcal{H}^{n-1}(dx).
\end{align*}
By \eqref{21} the second summand equals
\begin{align*}
h_m&(\lambda_t(K_t))\int_{\partial^1 K_t}\BE \I\{\eta_t(K_t)=m-1\} g(\eta_t+\delta_x)h(x)\,\mathcal{H}^{n-1}(dx)\\
&-h_m(\lambda_t(K_t))\int_{\partial^1 K_t}\BE \I\{\eta_t(K_t)=m\} g(\eta_t)h(x)\,\mathcal{H}^{n-1}(dx)\\
=&\frac{m}{\lambda_t(K_t)}
\int_{\partial^1 K_t}\BE g(\xi^{(m-1)}_t+\delta_x)h(x)\,\mathcal{H}^{n-1}(dx)
-\BE g(\xi^{(m)}_t)\int_{\partial^1 K_t}h(x)\,\mathcal{H}^{n-1}(dx).
\end{align*}
Hence \eqref{e7.3} follows. The proof of \eqref{e6.19} is similar.
\end{proof}

\section*{Conclusion}
\label{sec:conclusion}

In this paper, we have shown how a perturbation method can both
clarify known results and generate new ones. Our approach is based on
applying infinitesimal changes to the parameters of the underlying
probabilistic model and using Margulis--Russo type perturbation
formulas to control the resulting variations.
It would be interesting to know whether our Theorems \ref{t6.2} and \ref{t7.2}
can be used to establish certain monotonicity properties of convex hulls,
such as those studied in \cite{Rademacher12,ReichenReitzner16}.

% \bibliographystyle{plain}
% \bibliography{deriv}

\begin{thebibliography}{10}

\bibitem{Badd77}
A.~Baddeley.
\newblock Integrals on a moving manifold and geometrical probability.
\newblock {\em Adv. Appl. Prob.}, 9:588--603, 1977.

\bibitem{BJP:01}
V.~Bentkus, A.~Juozulynas, and V.~Paulauskas.
\newblock L\'evy--{LePage} series representation of stable vectors: convergence
  in variation.
\newblock {\em J. Theoretical Prob.}, 14(4):949--978, 2001.

\bibitem{BoRio06}
B.\ Bollob\'as and O.~Riordan.
\newblock {\em Percolation}.
\newblock Cambridge University Press, 2006.

\bibitem{Crof:1885}
M.W. Crofton.
\newblock {\em Encyclopaedia Britannica}, chapter Probability, pages 768--788.
\newblock Encyclopaedia Britannica, Inc., 9th edition, 1885.

\bibitem{EsaPro:63}
I.D. Esary and F.~Proschan.
\newblock Coherent structures of non-identical components.
\newblock {\em Technometrics}, 5:191--209, 1963.

\bibitem{Grim99}
G.~Grimmett.
\newblock {\em Percolation}.
\newblock Springer, 2nd edition, 1999.

\bibitem{HLW04}
D.~Hug, G.~Last, and W.~Weil.
\newblock A local {S}teiner-type formula for general closed sets and
  applications.
\newblock {\em Mathematische Zeitschrift}, 246:237--272, 2004.

\bibitem{Kallenberg02}
O.~Kallenberg.
\newblock {\em Foundations of Modern Probability}.
\newblock Springer, New York, second edition edition, 2002.

\bibitem{KendMor:63}
M.G. Kendall and P.A.P. Moran.
\newblock {\em Geometrical probability}.
\newblock Charles Griffin \& Co., London, 1963.

\bibitem{Last14}
G.~Last.
\newblock Perturbation analysis of {P}oisson processes.
\newblock {\em Bernoulli}, 20(2):486--513, 2014.

\bibitem{LastPenrose18}
G.~Last and M.~Penrose.
\newblock {\em Lectures on the {P}oisson process}.
\newblock IMS Textbooks. Cambridge University Press, 2018.

\bibitem{lp81}
R.~LePage, M.~Woodroofe, and J.~Zinn.
\newblock Convergence to a stable distribution via order statistics.
\newblock {\em Ann. Probab.}, 9:624–632, 1981.

\bibitem{Marg:74}
G.~A. Margulis.
\newblock Probabilistic characteristics of graphs with large connectivity.
\newblock {\em Problemy Peredachi Informatsii}, 10(2):101--108, 1974.
\newblock in Russian.

\bibitem{Men:87}
M.V. Menshikov.
\newblock Numerical bounds and strict inequalities for critical points of
  graphs and their subgraphs.
\newblock {\em Th. Probab. Appl.}, 32:544--547, 1987.

\bibitem{MolZu00}
I.~Molchanov and S.~Zuyev.
\newblock Variational analysis of functionals of {P}oisson processes.
\newblock {\em Math. Oper. Research.}, 25(3):485--508, 2000.

\bibitem{Panjer81}
H.H. Panjer.
\newblock Recursive evaluation of a family of compound distributions.
\newblock {\em ASTIN Bull.}, 24:19--32, 1981.

\bibitem{Press:72}
S.J. Press.
\newblock Multivariate stable distributions.
\newblock {\em J. Multivariate Analysis}, 2:444--462, 1972.

\bibitem{Rademacher12}
L.~Rademacher.
\newblock On the monotonicity of the expected volume of a random simplex.
\newblock {\em Mathematika}, 58:71--76, 2012.

\bibitem{ReichenReitzner16}
B.~Reichenwallner and M.~Reitzner.
\newblock On the monotonicity of the moments of volumes of random simplices.
\newblock {\em Mathematika}, 62:949--958, 2016.

\bibitem{Russo:81}
L.~Russo.
\newblock On the critical percolation probabilities.
\newblock {\em Z. Wahrsch. verw. Geb.}, 56:229--237, 1981.

\bibitem{solomon78}
H.~Solomon.
\newblock {\em Geometric Probability}.
\newblock Soc. for Industrial and Applied Mathematics, 1978.

\bibitem{Zue:92b}
S.~A. Zuyev.
\newblock {R}usso's formula for the {P}oisson point processes and its
  applications.
\newblock {\em Diskretnaya matematika}, 4(3):149--160, 1992.
\newblock (In Russian). English translation: {\it Discrete Math. and
  Applications}, {\bf 3}, 63-73, 1993.

\end{thebibliography}

\end{document}